\newtheorem{definition}{Definition}
\newtheorem{prop}{Proposition}
\newtheorem{lemma}{Lemma}
\newtheorem{conjecture}{Conjecture}
\newtheorem{theorem}{Theorem}
\newtheorem{remark}{Remark}
\newtheorem{question}{Question}
\newtheorem{example}{Example}
\newtheorem{notation}{Notation}
\newcommand{\Q}{\mathbb{Q}}
\newcommand{\Z}{\mathbb{Z}}
\newcommand{\C}{\mathbb{C}}
\newcommand{\CuspGroup}{\mathcal{F}_1}       
\newcommand{\OurCuspGroup}{\mathcal{F}_1'}   
\newcommand{\CuspDiv}{\mathcal{D}_1}         
\newcommand{\CuspClassGroup}{\mathcal{C}_1}
\newcommand{\set}[1]{\left\lbrace #1 \right\rbrace}
\newcommand{\field}[1]{\mathbb{#1}}  
\newcommand{\halfplane}{\field H}
\newcommand{\F}{\field{F}}
\newcommand{\noarg}{\_}
\newcommand{\norm}[2]{\parallel \!\! #1 \!\! \parallel_{#2}}
\renewcommand{\P}{\field{P}}
\DeclareMathOperator{\spec}{Spec}
\DeclareMathOperator{\gon}{Gon}
\DeclareMathOperator{\mdeg}{mdeg}
\DeclareMathOperator{\supp}{Supp}
\DeclareMathOperator{\aut}{Aut}
\DeclareMathOperator{\Div}{div}
\DeclareMathOperator{\Pic}{pic}
\DeclareMathOperator{\PSL}{PSL}
\definecolor{green}{rgb}{0,0.7,0}
\title{Gonality of the modular curve $X_1(N)$}
\author{Maarten Derickx\footnote{Mathematisch Instituut, Universiteit Leiden, Postbus 9512, 2300 RA Leiden, The Netherlands.
	E-mail: {\sf maarten@mderickx.nl}.} \ \ and \ Mark van Hoeij\footnote{Department of Mathematics, 
        Florida State University, Tallahassee, Florida 32306, USA.
        E-mail: {\sf hoeij@math.fsu.edu}. Supported by NSF grants 1017880 and 1319547.}}
\begin{document}
\maketitle

\begin{abstract}
In this paper we compute the gonality over $\Q$ of the modular curve $X_1(N)$ for all $N \leqslant 40$ and give upper bounds for each $N \leqslant 250$. This allows us to determine all $N$
for which
$X_1(N)$ has infinitely points of degree $\leqslant 8$. We conjecture that the modular units of $\Q(X_1(N))$
are freely generated by $f_2,\ldots,f_{\lfloor N/2 \rfloor +1}$ where $f_k$ is 
obtained from the equation for $X_1(k)$.
\end{abstract}

\section{Introduction}
\begin{notation} If $K$ is a field, and $C/K$ is a curve\footnote{In this paper, a {\em curve} over a field $K$
is a scheme, projective and smooth of relative dimension 1 over $\spec K$ that is 
geometrically irreducible.}, then $K(C)$ is the function field
of $C$ over $K$. The gonality ${\rm Gon}_{K}( C )$ is ${\rm min} \{ {\rm deg}(f) \, | \, f \in K(C)-K \}$.
In this article we are interested in the case $C = X_1(N)$, and $K$ is either $\Q$ or $\mathbb{F}_p$.
\end{notation}

It was shown in \cite{scriptie} that if $C/\Q$ is a curve and $p$ is a prime of good reduction of then:
\begin{equation}
	{\rm Gon}_{\mathbb{F}_p}(C)  \leqslant  {\rm Gon}_{\Q}(C).    \label{CompareGon}
\end{equation}
A similar statement was given earlier in \cite{frey} which attributes it to \cite{deuring}.
We use~(\ref{CompareGon}) only for $C = X_1(N)$. The primes of good reduction of $X_1(N)$ are the primes $p\nmid N$.


	%
The main goal in this paper is to compute ${\rm Gon}_{\Q}(X_1(N))$ for $N \leqslant 40$.
The $\Q$-gonality for $N \leqslant 22$ was already known \cite[p.~2]{sutherland},
so the cases $23 \leqslant N \leqslant 40$ are of most interest.
For each $N$, it suffices to:
\begin{itemize}
\item Task 1:  Compute a basis of ${\rm div}(\CuspGroup(N))$, which denotes the set of divisors of modular units over $\Q$, see 
Definition~\ref{DefCuspGroup} in Section~\ref{CuspFunctions} for details.
\item Task 2:  Use LLL techniques to search ${\rm div}(\CuspGroup(N))$ for the divisor of a non-constant function $g_N$ of lowest degree.
\item Task 3:  Prove (for some prime $p \nmid N$) that $\mathbb{F}_p(X_1(N)) - \mathbb{F}_p$
has no elements
of degree $< {\rm deg}(g_N)$.
Then~(\ref{CompareGon}) implies that the $\Q$-gonality is ${\rm deg}(g_N)$. \\
\end{itemize}
\mbox{} \ \ {\bf Table 1:} ${\rm Gon}_{\Q}(X_1(N))$ for $N \leqslant 40$. \ \ Upper bounds for $N \leqslant 250$. \\[3pt]
\begin{tabular}{|c|c|c|c|c|c|c|c|c|c|c|}
\hline
$N$ & 1 & 2 & 3 & 4 & 5 & 6 & 7 & 8 & 9 & 10 \\[-1pt]
gon $=$ & 1 & 1 & 1 & 1 & 1 & 1 & 1 & 1 & 1 & 1 \\ \hline   
$N$ & 11 & 12 & 13 & 14 & 15 & 16 & 17 & 18 & 19 & 20 \\[-1pt]
gon $=$ & 2 & 1 & 2 & 2 & 2 & 2 & 4 & 2 & 5 & 3 \\ \hline
$N$ & 21 & 22 & 23 & 24 & 25 & 26 & 27 & 28 & 29 & 30 \\[-1pt]
gon $=$ & 4 & 4 & 7 & 4 & 5 & 6 & 6 & 6 & 11 & 6 \\ \hline
$N$ & 31 & 32 & 33 & 34 & 35 & 36 & 37 & 38 & 39 & 40 \\[-1pt]
gon $=$ & 12 & 8 & 10 & 10 & 12 & 8 & 18 & 12 & 14 & 12 \\ \hline
$N$             & 41 & 42 & 43 & 44 & 45 & 46 & 47 & 48 & 49 & 50 \\[-1pt]
gon $\leqslant$ & 22 & 12 & 24 & 15 & 18 & 19 & 29 & 16 & 21 & 15 \\ \hline
$N$             & 51 & 52 & 53 & 54 & 55 & 56 & 57 & 58 & 59 & 60 \\[-1pt]
gon $\leqslant$ & 24 & 21 & 37 & 18 & 30 & 24 & 30 & 31 & 46 & 24  \\ \hline
$N$             & 61 & 62 & 63 & 64 & 65 & 66 & 67 & 68 & 69 & 70 \\[-1pt]
gon $\leqslant$ & 49 & 36 & 36 & 32 & 42 & 30 & 58 & 36 & 44 & 36 \\ \hline
$N$             & 71 & 72 & 73 & 74 & 75 & 76 & 77 & 78 & 79 & 80 \\[-1pt]
gon $\leqslant$ & 66 & 32 & 70 & 51 & 40 & 45 & 60 & 42 & 82 & 48 \\ \hline
$N$             & 81 & 82 & 83 & 84 & 85 & 86 & 87 & 88 & 89 & 90 \\[-1pt]
gon $\leqslant$ & 54 & 58 & 90 & 48 & 72 & 64 & 70 & 60 & 104& 48 \\ \hline
$N$             & 91 & 92 & 93 & 94 & 95 & 96 & 97 & 98 & 99 & 100 \\[-1pt]
gon $\leqslant$ & 84 & 66 & 80 & 83 & 90 & 56 & 123 & 63 & 90 & 60 \\ \hline
$N$             & 101 & 102 & 103 & 104 & 105 & 106 & 107 & 108 & 109 & 110 \\[-1pt]
gon $\leqslant$ & 133 & 72 & 139 & 84 & 96 & 105 & 150 & 72 & 156 & 90 \\ \hline
$N$             & 111 & 112 & 113 & 114 & 115 & 116 & 117 & 118 & 119 & 120 \\[-1pt]
gon $\leqslant$ & 114 & 96 & 167 & 90 & 132 & 105 & 126 & 120 & 144 & 96 \\ \hline
$N$             & 121 & 122 & 123 & 124 & 125 & 126 & 127 & 128 & 129 & 130 \\[-1pt]
gon $\leqslant$ & 132 & 139 & 140 & 120 & 125 & 96 & 211 & 112 & 154 & 126 \\ \hline
$N$             & 131 & 132 & 133 & 134 & 135 & 136 & 137 & 138 & 139 & 140 \\[-1pt]
gon $\leqslant$ & 225 & 120 & 180 & 156 & 144 & 144 & 246 & 132 & 253 & 144 \\ \hline
$N$             & 141 & 142 & 143 & 144 & 145 & 146 & 147 & 148 & 149 & 150 \\[-1pt]
gon $\leqslant$ & 184 & 189 & 210 & 128 & 210 & 184 & 168 & 171 & 291 & 120 \\ \hline
$N$             & 151 & 152 & 153 & 154 & 155 & 156 & 157 & 158 & 159 & 160 \\[-1pt]
gon $\leqslant$ & 299 & 180 & 216 & 180 & 240 & 168 & 323 & 234 & 234 & 184 \\ \hline
$N$             & 161 & 162 & 163 & 164 & 165 & 166 & 167 & 168 & 169 & 170 \\[-1pt]
gon $\leqslant$ & 264 & 162 & 348 & 210 & 240 & 240 & 365 & 192 & 260 & 216 \\ \hline
$N$             & 171 & 172 & 173 & 174 & 175 & 176 & 177 & 178 & 179 & 180 \\[-1pt]
gon $\leqslant$ & 270 & 231 & 392 & 210 & 240 & 240 & 290 & 274 & 420 & 192 \\ \hline
$N$             & 181 & 182 & 183 & 184 & 185 & 186 & 187 & 188 & 189 & 190 \\[-1pt]
gon $\leqslant$ & 429 & 252 & 310 & 264 & 342 & 240 & 360 & 276 & 288 & 270 \\ \hline
$N$             & 191 & 192 & 193 & 194 & 195 & 196 & 197 & 198 & 199 & 200 \\[-1pt]
gon $\leqslant$ & 478 & 224 & 488 & 328 & 336 & 252 & 508 & 240 & 519 & 240 \\ \hline
$N$             & 201 & 202 & 203 & 204 & 205 & 206 & 207 & 208 & 209 & 210 \\[-1pt]
gon $\leqslant$ & 374 & 382 & 420 & 288 & 420 & 398 & 396 & 336 & 450 & 288 \\ \hline
$N$             & 211 & 212 & 213 & 214 & 215 & 216 & 217 & 218 & 219 & 220 \\[-1pt]
gon $\leqslant$ & 583 & 351 & 420 & 396 & 462 & 288 & 480 & 445 & 444 & 360 \\ \hline
$N$             & 221 & 222 & 223 & 224 & 225 & 226 & 227 & 228 & 229 & 230 \\[-1pt]
gon $\leqslant$ & 504 & 342 & 651 & 384 & 360 & 444 & 675 & 360 & 687 & 396 \\ \hline
$N$             & 231 & 232 & 233 & 234 & 235 & 236 & 237 & 238 & 239 & 240 \\[-1pt]
gon $\leqslant$ & 480 & 420 & 711 & 336 & 552 & 435 & 520 & 432 & 748 & 384 \\ \hline
$N$             & 241 & 242 & 243 & 244 & 245 & 246 & 247 & 248 & 249 & 250 \\[-1pt]
gon $\leqslant$ & 761 & 396 & 486 & 465 & 504 & 420 & 630 & 480 & 574 & 375 \\ \hline
\end{tabular}
\\
\noindent Tasks 1--3 are only possible when:
\begin{enumerate}
\item[(a)] There is a modular unit $g_N$ of degree ${\rm Gon}_{\Q}(X_1(N))$.
\item[(b)] There is a prime $p \nmid N$ for which ${\rm Gon}_{\mathbb{F}_p}(X_1(N)) = {\rm Gon}_{\Q}(X_1(N))$.
\end{enumerate}

We have completed Tasks 1--3 for $1 < N \leqslant 40$, and hence (a),(b) are true in this range.
We do not know if they hold in general.

We implemented two methods for Task~1. Our webpage~\cite{URLgonality} gives the resulting
basis of ${\rm div}(\CuspGroup(N))$ for $N \leqslant 300$.
For Task~2, for each $4 \leqslant N \leqslant 300$ we searched ${\rm div}(\CuspGroup(N))$ for short\footnote{We want vectors with small 1-norm
because ${\rm deg}(g) = \frac12 || {\rm div}(g) ||_1.$}
vectors, and placed the best function we found, call it $g_N$, on our webpage~\cite{URLgonality}.
The degree of any non-constant function is by definition an upper bound for the gonality.
Table~1 gives ${\rm deg}(g_N)$ for $N \leqslant 250$.

Finding the shortest vector in a $\Z$-module is NP-hard. For large $N$, this forced us to
resort to a probabilistic search (we randomly scale our vectors, apply an LLL search, and repeat).
So we can not prove that every $g_N$ on our webpage is optimal, even if we assume (a).

For certain $N$ (e.g. $N=p^2$, see Section~\ref{Patterns})
there are other ways of finding functions of low degree.
Sometimes a good function can be found in a subfield of $\Q(X_1(N))$ over $\Q(X_1(1))$, see \cite{URLgonality}.
All low degree functions we found with these methods were also found by
our probabilistic LLL search. So the upper bounds in Table~1 are likely sharp when (a) holds (Question~\ref{vraag} in Section~\ref{Task1}).

At the moment, our only method to prove that an upper bound is sharp
is to complete Task~3, which we have done for $N \leqslant 40$.
The computational cost of Task~3 increases drastically as a function of the gonality.
Our range $N \leqslant 40$ contains gonalities that are much higher
than the previous record, so in order to perform Task~3 for all $N \leqslant 40$ it
was necessary to introduce several new computational ideas.

Upper bounds (Tasks~1 and~2) will be discussed in Section~\ref{CuspFunctions}, and lower bounds
(Task~3) in Section~\ref{lower}.
We cover $N=37$ separately (Theorem~\ref{N37}), this case is the most work
because it has the highest gonality in our range $N \leqslant 40$.
Sharp lower bounds for other $N \leqslant 40$ can be obtained with the same ideas.  Our
computational proof (Task~3) for each $N \leqslant 40$ can be verified by downloading the Magma files from~\cite{URLgonality}.

\begin{remark} \label{sharpCompare}
For each $N \leqslant 40$, the 
$\Q$-gonality happened to be the ${\mathbb{F}_p}$-gonality
for the smallest prime $p \nmid N$.
That was fortunate because the computational complexity of Task~3 depends on $p$.

We can not expect the ${\mathbb{F}_p}$-gonality to equal the $\Q$-gonality for every $p$.
For example, consider the action of diamond operator $<\hspace{-3pt}12\hspace{-3pt}>$
on $\C(X_1(29))$.
The fixed field has index 2
and genus 8
(type: \verb+GammaH(29,[12]).genus()+ in Sage).
By Brill-Noether theory, this subfield contains a function $f_{\rm BN}$ of degree $\leqslant \lfloor (8+3)/2 \rfloor = 5$.
Viewed as element of $\C(X_1(29))$, its degree is $\leqslant 2 \cdot 5$ which is less than the
$\Q$-gonality\footnote{We do not know if there are other $N \leqslant 40$ with $\C$-gonality $\neq$ $\Q$-gonality.} 11.
By Chebotarev's theorem, there must then be a positive density of primes $p$
for which the $\mathbb{F}_p$-gonality of $X_1(29)$ is less than 11. 
\end{remark}


\section{Modular equations and modular units}
\label{CuspFunctions}
\begin{definition}
\label{DefCuspGroup}
A non-zero element of $\Q(X_1(N))$ is called a {\em modular unit} (see \cite{kubertlang}) when all its poles and roots are cusps.
Let $\CuspGroup(N) \subset \Q(X_1(N))^* / \Q^*$ be the group of {modular units} mod $\Q^*$.

There are $\lfloor N/2 \rfloor +1$  ${\rm Gal}(\overline{\Q}/\Q)$-orbits of cusps,
denoted\footnote{Let $d|N$, $0 \leqslant i < d$, with gcd$(i,d)=1$ and let $j$ be such that the point $P_{d,i,j} = (i,\zeta_N^j)$
has order $N$ in the Neron $d$-gon $\Z/d\Z \times \mathbb G_m$.  Let $C_{d,i,j}$ be the cusp corresponding to $P_{d,i,j}$,
then $C_{d,i,j}$ and $C_{d',i',j'}$ are in the same Galois orbit iff $d=d'$ and $i \equiv \pm i'$ mod $d$. We denote the Galois orbit
of $C_{d,i,j}$ as $C_n$ where $0 \leqslant n \leqslant N/2$ and $n \equiv \pm iN/d$ mod $N$. With this numbering, the diamond operator
$<\hspace{-3pt}i\hspace{-3pt}>$ sends $C_n$
to $C_{n'}$ where $n' \equiv \pm ni$ mod $N$.} as $C_0,\ldots,C_{\lfloor N/2 \rfloor}$. Let
\[ \CuspDiv(N) := \Z C_0 \oplus \cdots \oplus \Z C_{\lfloor N/2 \rfloor} \]
be the set of $\Q$-rational cuspidal divisors.
The degree\footnote{The degree of $C_i$ is as follows.  Let $d = {\rm gcd}(i,N)$.
If $i \in \{0,N/2\}$ then ${\rm deg}(C_i) = \lceil \phi(d)/2 \rceil$, otherwise ${\rm deg}(C_i) = \phi(d)$, where $\phi$ is Euler's function.}
of $\sum n_i C_i$ is $\sum n_i {\rm deg}(C_i)$.
Denote $\CuspDiv^0(N)$ as the set of cusp-divisors of degree 0, and
\[ \CuspClassGroup(N) := \CuspDiv^0(N) / {\rm div}(\CuspGroup(N)), \] a finite group called the {\em cuspidal class group}.
\end{definition}

Let $E$ be an elliptic curve over a field $K$, and $P$ be a point on $E$ of order exactly $N$.
If $N \geqslant 4$ and ${\rm char}(K) \nmid N$,
one can represent the pair $(E,P)$ in {\em Tate normal form:}
\begin{equation}
\label{bc}
	Y^2+(1-c)XY-bY = X^3-bX^2, {\rm \ \ with \ the \ point \ } (0,0).
\end{equation}
This representation is unique and hence $b,c$ are functions on pairs $(E,P)$.
The function field $K(X_1(N))$ is generated by $b,c$.
Whenever we use the notation $b$ or $c$, we implicitly assume $N \geqslant 4$,
because the reduction to~(\ref{bc}) succeeds if and only if $N \geqslant 4$.
This implies (for $N \geqslant 4$) that poles of $b,c$ must be cusps.
The discriminant of~(\ref{bc}) is $\Delta := b^3 \cdot (16b^2+(1-20c-8c^2)b+c(c-1)^3)$
so $E$ degenerates when $\Delta=0$.
So all roots of $\Delta$ (and hence of $b$) are cusps.
Poles of $\Delta, b$ are cusps because poles of $b,c$ are cusps.
So $\Delta,b$ are modular units, and hence
\[ F_2 := b^4/\Delta  \  = \  \frac{b}{16b^2+(1-20c-8c^2)b+c(c-1)^3}  \ \ \ {\rm and} \ \ \ F_3 := b
\]
are modular units as well.

For $N \geqslant 4$, the functions $b,c$ on $X_1(N)$ satisfy a polynomial equation $F_N \in \Z[b,c]$,
namely (for $N=4,5,6,7,\ldots$) \  $c$, \ $b-c$, \ $c^2+c-b$, \ $b^2-bc-c^3, \ldots$

If $k \neq N$, 
the condition that the order of $P$ is $k$ is incompatible with the condition that the order is $N$.
This, combined with the observation that all poles of $b,c$ are cusps, implies (for $N,k \geqslant 4$)
that the {\em modular equation} $F_k$ is a {\em modular unit} for $X_1(N)$.
We define a subgroup of $\CuspGroup(N)$ generated by {\em modular equations}\footnote{$F_k$ is a modular equation for $X_1$
if it corresponds to $P$ having order $k$.  A computation is needed to show that $F_2$, $F_3$
are modular equations in this sense. The fact that $F_2$ and $F_3$ correspond to order 2 and 3 is obscured by the $b,c$ coordinates,
so we introduce $j,x_0$ coordinates for $X_1(N)$ that apply to any $N>1$ provided that $j \not\in \{0,1728\}$.
Here $x_0$ is the $x$-coordinate of a point $P$ on $y^2 = 4x^3 - 3j(j-1728) x - j(j-1728)^2$.
The condition that $P$ has order 2 or 3 can be expressed with equations $\tilde{F}_2, \tilde{F}_3 \in \Q[j,x_0]$.
These $\tilde{F}_2, \tilde{F}_3$ are functions on $X_1(N)$ for any $N>1$. Hence they can (for $N>3$)
be rewritten to $b,c$ coordinates. To obtain modular units, we have to ensure that all poles and roots are cusps,
which requires an adjustment: $F_2 := \tilde{F}_2^2/(j^2(j-1728)^3)$ and $F_3 := \tilde{F}_3^3/\tilde{F}_2^4$.}:
\[  \OurCuspGroup(N) := \, <\hspace{-2pt}F_2,F_3,\ldots, F_{\lfloor N/2 \rfloor + 1}\hspace{-2pt}> \  \subseteq \, \CuspGroup(N). \]
\vspace{1pt}

\begin{conjecture} \label{klomp}
$\OurCuspGroup(N) = \CuspGroup(N)$ for $N \geqslant 3$.
In other words, $\CuspGroup(N)$ is
freely generated by modular equations $F_2, \ldots, F_{\lfloor N/2 \rfloor + 1}$.
\end{conjecture}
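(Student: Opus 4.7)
The plan is to reduce the conjecture to two independent assertions: first, that the modular equations $F_2,\ldots,F_{\lfloor N/2 \rfloor + 1}$ are multiplicatively independent modulo $\Q^*$ (making $\OurCuspGroup(N)$ free abelian of rank $\lfloor N/2 \rfloor$); and second, that the resulting subgroup is not merely of finite index in $\CuspGroup(N)$ but equal to it. Note that $\CuspGroup(N)$ is automatically torsion-free, since any modular unit that is a root of a constant has no zeros or poles on the projective curve $X_1(N)$ and is therefore itself constant; so it suffices to compare divisor lattices.

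For the independence, the first step is to compute $\mathrm{div}(F_k)$ explicitly in the basis $C_0, \ldots, C_{\lfloor N/2 \rfloor}$ of $\CuspDiv(N)$. Since $F_k$ cuts out the locus where the marked point has order $k$, and the cusp $C_n$ (in the notation of the footnote to Definition~\ref{DefCuspGroup}) parametrizes a specific N\'eron polygon degeneration with a point of order $N$, the multiplicity of $F_k$ at $C_n$ is read off from whether the corresponding point on that polygon is of order $k$. Arranging the $F_k$ in order of increasing $k$ and choosing a compatible ordering of cusps, one expects the resulting $\lfloor N/2 \rfloor \times (\lfloor N/2 \rfloor + 1)$ integer matrix of divisor valuations to have a triangular square submatrix with nonzero diagonal; this would establish $\Z$-linear independence and hence freeness.

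For the equality $\OurCuspGroup(N) = \CuspGroup(N)$, I would invoke the Manin--Drinfeld theorem: since $\CuspClassGroup(N)$ is finite, $\CuspGroup(N)$ is free abelian of rank $\lfloor N/2 \rfloor$, matching the rank of $\OurCuspGroup(N)$, and the quotient is therefore finite. Its order equals
$$
\frac{[\CuspDiv^0(N) : \mathrm{div}(\OurCuspGroup(N))]}{|\CuspClassGroup(N)|},
$$
where the numerator is a determinant computable from the divisor matrix of step one, and the denominator can in principle be extracted from the Kubert--Lang style class number formula for $X_1(N)$. The conjecture then reduces to showing these two quantities coincide.

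The main obstacle is precisely this last matching. A uniform closed-form comparison with $|\CuspClassGroup(N)|$ for all $N$, especially composite $N$ with many prime factors, does not appear in the literature in a form that directly fits the generators $F_k$. A promising route is to express the Siegel units, or rather their Galois-symmetric $\Q$-rational powers (which generate a known finite-index subgroup of $\CuspGroup(N)$), as explicit products of the modular equations $F_k$; if every such Siegel unit power is algorithmically rewritten as an element of $\OurCuspGroup(N)$, the quotient must collapse. As a back-up, one could try induction on the factorization of $N$ using the degeneracy maps $X_1(N)\to X_1(M)$ for $M\mid N$, reducing to prime-power cases where the cuspidal class group is most accessible, and checking that the pullbacks of the $F_k$ already generate the known $\Q$-rational modular units at each stage.
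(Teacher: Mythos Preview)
The paper does not prove this statement: it is stated as a \emph{conjecture}, and the only evidence offered is computational verification for $N\leqslant 100$ (comparing the Sage computation of $\CuspClassGroup(N)$ via modular symbols with the quotient $\CuspDiv^0(N)/\mathrm{div}(\OurCuspGroup(N))$ obtained from the explicit $\mathrm{div}(f_k)$ data), together with a one-line argument for the base case $N=3$. So there is no ``paper's own proof'' to compare against.

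Your proposal is a reasonable outline of what a proof would have to accomplish, and the reduction to matching a divisor-determinant against $|\CuspClassGroup(N)|$ is correct in spirit. But you yourself identify the genuine gap: the uniform comparison of $[\CuspDiv^0(N):\mathrm{div}(\OurCuspGroup(N))]$ with $|\CuspClassGroup(N)|$ for all $N$ is not available, and neither of your suggested workarounds (rewriting $\Q$-rational Siegel-unit powers as products of the $F_k$, or induction along degeneracy maps) is carried out. Absent that step, what you have written is a proof \emph{strategy}, not a proof; and since the statement is an open conjecture in the paper, that is to be expected. If you do want to push further, the Siegel-unit route is the more promising of the two, but note that the Kubert--Lang theory gives generators for the full modular-unit group over $\Q(\zeta_N)$, and descending to $\Q$ and matching with the specific system $F_2,\ldots,F_{\lfloor N/2\rfloor+1}$ is exactly where the difficulty lies.
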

We verified this for $N \leqslant 100$, see also Section~\ref{CompRemarks}.
The conjecture holds for $N=3$ because $F_2$ rewritten to $j,x_0$ coordinates
generates $\CuspGroup(3)$. The case $N=2$ is a little different, clearly $F_2$ can not generate $\CuspGroup(2)$ since it must
vanish on $X_1(2)$.  However, rewriting $F_2 F_4$ to $j,x_0$ coordinates produces a generator for $\CuspGroup(2)$.
The conjecture is only for $\Q$; if ${X_1(N)_K}$ has more than $\lfloor N/2 \rfloor + 1$ Galois orbits of cusps,
for example $X_1(5)_{K}$ with $K=\C$ or $K=\F_{11}$, then the rank of $\OurCuspGroup(N)$ would be too low.


\subsection{Computations} \label{CompRemarks}
As $N$ grows, the size of $F_N$ grows quickly.
Sutherland \cite{sutherland} obtained smaller equations by replacing $b,c$
with other generators of the function field. For $6 \leqslant N \leqslant 9$, use $r,s$
defined by \[   r = \frac{b}{c}, \ s = \frac{c^2}{b-c}, \ \  b = r s (r - 1), \ c = s (r - 1)  \]
and for $N \geqslant 10$, use $x,y$ defined by
\[ x = \frac{s-r}{rs-2r+1}, \ y = \frac{rs-2r+1}{s^2-s-r+1}, \ \ r = \frac{x^2y-xy+y-1}{x(xy-1)}, \ s = \frac{xy-y+1}{xy}. \]
The polynomial defining $X_1(N)$ is then written as $f_4 := c$, $f_5 := b-c$, $f_6 := s-1$,
$f_7 := s-r$, $f_8 := rs-2r+1$, $f_9 := s^2-s-r+1$,
$f_{10} := x - y + 1$, $f_{11} := x^2y-xy^2+y-1$, $f_{12} := x-y$,
$f_{13} := x^3y-x^2y^2-x^2y+xy^2-y+1$, etc.   Explicit expressions for $f_{10},\ldots,f_{189} \in \Z[x,y]$
can be downloaded from Sutherland's website \url{http://math.mit.edu/~drew/X1_altcurves.html}.

The same website also lists upper bounds for the gonality for $N \leqslant 189$, that are
often sharp when $N$ is prime.
Table~1 improves this bound for every composite $N > 26$, a few composite $N < 26$, but only three primes: $31$, $67$, and $101$.
When $N$ is prime, we note that Sutherland's \cite{sutherland} bound, ${\rm deg}(x)$, equals $[ 11 N^2/840 ]$ where the brackets denote rounding
to the nearest integer ($[11 N^2/840 ]$ is a valid upper bound for any $N>6$, but it is not very sharp for composite $N$'s).

Let $f_2 := F_2$ and $f_3 := F_3$.
Then $F_k/f_k \in \, <\hspace{-2pt}f_2,\ldots,f_{k-1}\hspace{-2pt}>$ for each $k \geqslant 2$.  In particular
\[ \OurCuspGroup(N) = \, <\hspace{-2pt}f_2, f_3, \ldots f_{\lfloor N/2 \rfloor +1}\hspace{-2pt}>. \]

For each $3 \leqslant N \leqslant 300$ and $2 \leqslant k \leqslant \lfloor N/2 \rfloor + 1$
we calculated ${\rm div}(f_k) \in \CuspDiv(N)$. This data can be downloaded (in row-vector notation) from
our webpage \cite{URLgonality}.
This data allows one to determine $\CuspDiv^0(N) / {\rm div}( \OurCuspGroup(N) )$ for $N \leqslant 300$.
If that
is $\cong \CuspClassGroup(N)$, then the conjecture holds for $N$. We tested this by computing $\CuspClassGroup(N)$ with
Sage\footnote{The $\Z$-module of modular units is computed with modular symbols by determining
the $\sum n_ic_i \in \Z^{\rm cusps}$ of degree $0$ with $\sum n_i \lbrace c_i,\infty\rbrace \in H_1(X_1(N)(\C),\Z) \subset H_1(X_1(N)(\C),\Q)$.}
for $N \leqslant 100$.
The ${\rm div}(f_k)$-data has other applications as well:
\begin{example}
Let $N = 29$. Suppose one wants to compute explicit generators for
the subfield of index 2 and genus 8 mentioned in Remark~\ref{sharpCompare}.
Let $\tilde{x},\tilde{y}$ denote the images of $x,y$ under the diamond operator $<\hspace{-3pt}12\hspace{-3pt}>$.
Clearly $\tilde{x}x, \, \tilde{y}y$ are in our subfield, which raises the question: How to compute $\tilde{x},\tilde{y}$?

Observe that $x = f_7/f_8$ and $y = f_8/f_9$ (The relations $1-x = f_5 f_6/(f_4f_8)$, \ $1-y = f_6 f_7/f_9$, \  $1-xy = f_6^2/f_9$
may be helpful for other examples.)  So we can find ${\rm div}(x)$ by subtracting the (7-1)'th and (8-1)'th row-vector
listed at \cite{URLgonality} for $N=29$.
We find $(0, -1, -2, -3, -1, 0, 0, 0, 3, 2, -1, -3, 2, 3, 1)$ which encodes ${\rm div}(x) =$
\[  -C_1 -2C_2 - 3C_3 - C_4 + 3 C_8 + 2C_9 - C_{10} -3 C_{11} + 2 C_{12} + 3 C_{13}  + C_{14}.  \]
The diamond operator $<\hspace{-3pt}12\hspace{-3pt}>$ sends $C_i$ to $C_{\pm 12i \ {\rm mod}\,N}$
and hence ${\rm div}( \tilde{x} ) =$
\[ 2 C_1 - C_4 -2 C_5 +C_6 -3C_7+ 2C_8+ 3C_9 - C_{10}+ 3C_{11} -C_{12} -3 C_{13}. \]
Since ${\rm div}(f_2),\ldots,{\rm div}(f_{15})$ are listed explicitly at \cite{URLgonality}, solving linear equations provides
$n_2,\ldots,n_{15}$ for which ${\rm div}( \tilde{x} ) = \sum n_i {\rm div}( f_i )$.
Setting $g := \prod f_i^{n_i} =$
\[{\frac { ( {x}^{2}y-xy+y-1 )  ( x-1 ) ^{2} ( x-y+1 )  ( {x}^{2}y-x{y}^{2}-{x}^{2}
+xy-x+y-1 ) ^{4}{y}^{3}}{ ( y-1 ) ^{2} ( xy-1 )  ( x-y )  ( {x}^{2}y-x{y}^{
2}-xy+{y}^{2}-1 ) ^{4}{x}^{4}}}, \]
it follows that $\tilde{x} = cg$ for some constant $c$
($c$ is not needed here, but it can be determined easily by evaluating $\tilde{x}$ and $g$ at a point.)
Repeating this computation for $y$, we find explicit expressions for $\tilde{x}x, \, \tilde{y}y$.
An algebraic relation can then be computed with resultants; it turns out that $\tilde{x}x, \, \tilde{y}y$ generate the subfield.
\end{example}

\subsection{Explicit upper bound for the gonality for $N \leqslant 40$} \label{Task1}
The following table lists for each $10 < N \leqslant 40$ a function of minimal degree.
We improve the upper bound from Sutherland's website (mentioned in the previous section)
in 16 out of these 30 cases. \\[5pt]
\begin{tabular}{|c|c|c|c|c|c|c|c|c|c|c|c|c|c|c|}
\hline
          11     &  12      &    13    &   14     &    15    &  16     &    17   &    18       &     19   &     20 &  21     &  22      &    23    &   24     &    25  \\
          $x$    &  $x$     &    $x$   &   $x$    &    $x$   &  $y$    &    $x$  &   $h_1$     &     $x$  &    $x$& $h_1$   &  $x$     &    $x$   &  $h_1$   &  $h_2$  \\ \hline
          26     &    27   &    28       &     29   &     30    &  31     &  32      &    33    &   34     &    35    &  36     &    37   &    38       &     39   &     40     \\
          $y$    &   $h_3$ &  $h_3$      &     $x$  &   $h_5$   &  $h_1$  &  $h_4$   &   $h_6$  & $h_1$    &  $h_7$   &  $h_8$  &   $x$   &   $h_2$     &   $h_9$  &   $h_5$    \\
\hline
\end{tabular} \\[3pt]

\noindent Here
\[ h_1 =  \frac{x^2 y-x y^2+y-1}{(x-y)x^2y}, \ \ 
h_2 =  \frac{x (1-y) (x^2 y-x y^2-x y+y^2-1)}{ (x-y+1) (x^2 y-x y^2+y-1) }, \]
\[ h_3 =  \frac{(1-x) (x^2 y-x y^2-x y+y^2-1)}{(x-y) (x^2 y-x y^2+y-1)}, \ \ 
h_4 =  \frac{(1-x) (x^2 y-x y^2+y-1)}{x (1-y)}, \]
\[ h_5 =  \frac{(1-y) (x^2 y-x y^2-x y+y^2-1)}{(x-y) y (x-y+1)} \]
\[
h_6 = \frac{f_{10}f_{11}f_{12}}{f_{17}}, \ \ 
h_7 = \frac{f_{17}}{f_{18}}, \ \
h_8 = \frac{f_{14}f_{17}^2}{f_{19}^2}, \ \ 
h_9 = \frac{f_{12}f_{13}f_{14}}{f_{19}}. \]
Each $h_1,\ldots,h_9$ is in the multiplicative group $<\hspace{-2pt}f_2,f_3,\ldots\hspace{-2pt}>$.
To save space, we only spelled out $h_1,\ldots,h_5$ in $x,y$-notation  (the $f_{19}$ that appears in $h_9$
is substantially larger than the $f_{11}$ that appears in $h_1$).
Similar expressions for $N \leqslant 300$ are given on our website \cite{URLgonality}.

\begin{question} \label{vraag}
Does $\Q(X_1(N))$ always contain a modular unit of degree equal to the $\Q$-gonality?
\end{question}
It does not suffice to restrict to rational cusps ($C_i$'s of degree 1) because then $N=36$ would
be the first counter example.
Question~\ref{vraag} may seem likely at first sight, after all, it is true for $N \leqslant 40$.
However, we do not conjecture it because the function $f_{\rm BN} \in \C(X_1(29))$ from Remark~\ref{sharpCompare} is not
a modular unit over $\C$, but unlike Conjecture~\ref{klomp}, there is no compelling reason to restrict Question~\ref{vraag} to $\Q$.


\section{Lower bound for the gonality} \label{lower}
Task~3 is equivalent to showing that the Riemann-Roch space $H^0(C,D)$ is $\mathbb{F}_p$ for
every divisor $D \geqslant 0$ of degree $<{\rm deg}(g_N)$.  This is a finite task, because over $\mathbb{F}_p$, the number
of such $D$'s is finite.
For $N=37$, the $\Q$-gonality is 18, and the number of $D$'s over $\mathbb{F}_2$ with $D \geqslant 0$
and ${\rm deg}(D)<18$ is far too large to
be checked one by one on a computer. So we will need other methods to prove:

\begin{theorem} \label{N37}  Let $f \in \mathbb{F}_2(X_1(37)) - \mathbb{F}_2$.  Then ${\rm deg}(f) \geqslant 18$.
\end{theorem}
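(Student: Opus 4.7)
The plan is to show, for each $d \in \{2,3,\ldots,17\}$, that no effective $\mathbb{F}_2$-rational divisor $D$ on $C := X_1(37)_{\mathbb{F}_2}$ of degree $d$ satisfies $h^0(D) \geq 2$. A function $f$ of degree $d$ supplies such a $D$ as any fiber $f^{-1}(a)$, so this suffices; in fact the three fibers over $\mathbb{P}^1(\mathbb{F}_2) = \{0,1,\infty\}$ are pairwise disjoint and hence give three distinct effective representatives of $[D]$, so the task reduces to a collision search: rule out two distinct effective $D, D'$ of equal degree $\leq 17$ with $[D] = [D']$ in $\mathrm{Pic}^d(C)(\mathbb{F}_2)$.

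The first step is to classify the closed points of $C$ of degree $\leq 17$. A non-cuspidal $\mathbb{F}_{2^k}$-point corresponds to a pair $(E,P)$ with $E/\mathbb{F}_{2^k}$ elliptic and $P \in E(\mathbb{F}_{2^k})$ of exact order $37$; the Hasse bound $\#E(\mathbb{F}_{2^k}) \leq 2^k + 1 + 2\sqrt{2^k}$ forces $k \geq 5$. So for $k \leq 4$ every closed point is a cusp, and by the description in Definition~\ref{DefCuspGroup} there are exactly $18$ rational cusps plus one Galois orbit of degree $18$ (the latter too large to appear in any $D$ of degree $\leq 17$). For $5 \leq k \leq 17$ one reads $\#C(\mathbb{F}_{2^k})$ off the $L$-factor of $J_1(37)$ at $2$, or counts directly on Sutherland's explicit model, giving a complete list of higher-degree closed points usable in $D$.

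The second step is the sweep: enumerate all effective $D$ of degree $\leq 17$ as $\mathbb{N}$-linear combinations of the closed points from Step~1, compute the class $[D - d \cdot O] \in J(\mathbb{F}_2)$ for a fixed rational cusp $O$, and search for collisions. Two representations of the class present themselves: Khuri-Makdisi or Hess reduced divisors on the explicit plane model, or the decomposition into the cuspidal part (for which explicit generators are already computed in Section~\ref{CuspFunctions}) plus a small non-cuspidal complement of $J(\mathbb{F}_2)$. Either representation furnishes a canonical form suitable for hashing.

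The main obstacle is combinatorial size: the number of effective divisors of degree $17$ on $C$ easily exceeds $10^{10}$ once higher-degree closed points are counted, so a naive sweep is hopeless. Making the computation terminate requires (a)~building divisors incrementally, degree by degree, by appending a single closed point and updating the class; (b)~quotienting by the action of the diamond operators and of the Frobenius on higher-degree closed points, keeping one representative per orbit; and (c)~pruning via Riemann-Roch, since $h^0(D) \geq 2$ is already very nongeneric for small $d$. I expect the real work to lie in this algorithmic engineering rather than in any new mathematical principle, consistent with the paper's remark that Task~3 at this gonality required ``several new computational ideas''.
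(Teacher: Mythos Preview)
Your reduction to a collision search in $\mathrm{Pic}^d(C)(\F_2)$ is correct, and your classification of low-degree places (only the $18$ rational cusps below degree $5$; the degree-$18$ cusp orbit out of range) matches what the paper uses. But the entire content of Theorem~\ref{N37} is computational feasibility, and here your plan does not close. You yourself estimate $>10^{10}$ effective divisors; the $\binom{34}{17}\approx 2.3\times 10^9$ supported purely on the rational cusps already dominates, diamond symmetry saves at most a factor of $18$, the ``Frobenius on higher-degree closed points'' is a no-op (a closed point \emph{is} a Frobenius orbit), and ``pruning via Riemann--Roch'' is circular since computing $h^0(D)$ is precisely the expensive step. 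What remains is a $\sim 10^8$-scale class-group sweep whose termination you have not demonstrated.

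The paper's proof rests on two devices absent from your plan. For $\mathrm{mdeg}(f)=1$ (support entirely in the $18$ rational cusps) it does \emph{not} enumerate effective divisors at all: it computes the kernel $N\subset\Z^{18}$ of the map to $\mathrm{Pic}\,C$ once (via {\tt ClassGroup}), then runs a \emph{lattice short-vector} enumeration on $N$ to verify that no nonzero principal divisor has $\|D\|_2^2\le 2(14^2+3^2)$ and $\tfrac12\|D\|_1\le 17$; the residual possibility (a zero or pole of order $\ge 15$ at a single cusp) is then a handful of explicit Riemann--Roch spaces. This replaces a $10^9$-term sweep by one {\tt ShortVectors} call. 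For $\mathrm{mdeg}(f)>1$ the paper uses a pigeon-hole subdivision (Proposition~\ref{prop:subdivision}): since $\#C(\F_2)=18$ and $\#\P^1(\F_2)=3$, some fibre of any $f$ of degree $\le 17$ contains many rational cusps, so it suffices to test Riemann--Roch spaces of divisors of the form $k\cdot\mathtt{cuspsum}+D'$ with $D'$ small (the ``dominating set'' formalism of Proposition~\ref{prop:empty}). Combined with the fortunate absence of places of degrees $2$--$5$ and $8$ on $X_1(37)_{\F_2}$, this reduces the mixed case to the few dozen Riemann--Roch computations tabulated in Appendix~A.

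So the gap is not a wrong idea but two missing ones: you have the right finite problem but not the reductions---lattice short vectors for the purely cuspidal part, dominating sets via pigeon-hole for the rest---that turn it into a computation that actually terminates.
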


\begin{definition}
Let $f \in K(X_1(N))$.  The support $\supp(\Div(f))$ is $\{ P \in X_1(N)_K \, |$ $v_P(f) \neq 0\}$,
i.e., the set of places where $f$ has a non-zero valuation (a root or a pole).
Let ${\rm mdeg}_K(f)$ denote ${\rm max}\{ {\rm deg_K}(P) \, | \, P \in \supp(\Div(f)) \}$.  Likewise, if $D = \sum n_i P_i$ is a divisor,
then ${\rm mdeg}_K(D) := {\rm max}\{ {\rm deg}_K(P_i) \, | \, n_i \neq 0\}$.
\end{definition}

\noindent {\bf Overview of the proof of Theorem~\ref{N37}:}  \\[2pt]
We split the proof in two cases: Section~\ref{md1}
will prove Theorem~\ref{N37} for the case ${\rm mdeg}(f)=1$.
Section~\ref{md2} will introduce notation, and prove Theorem~\ref{N37} for
the case ${\rm mdeg}(f) > 1$.
(Task~3 for the remaining $N \leqslant 40$ is similar to Section~\ref{md2} but easier, and will be discussed in Section~\ref{Task3}.)

{\subsection{The $\F_2$ gonality of $X_1(37)$}
In \cite{scriptie} there are already tricks for computing the $\F_p$ gonality 
in a computationally more efficient way then the brute force method from earlier papers.
These tricks were not efficient enough to compute the $\F_2$ gonality of $X_1(37)$.
However, by subdividing the problem, treating one part with lattice reduction techniques, and the
other part with tricks from \cite{scriptie},
the case $N=37$ becomes manageable on a computer.
We divide the problem as follows:
\begin{prop}\label{prop:subdivision}
If there is a $g\in \F_2(X_1(37)) - \F_2$ with $\deg(g) \leqslant 17$ then there is an $f \in \F_2(X_1(37)) - \F_2$ with $\deg(f) \leqslant  17$ that satisfies at least one of the following conditions:
\begin{enumerate}
\item $\mdeg(f) = 1$ \label{enum:subdiv1}
\item all poles of $f$ are rational cusps, and $f$ has $\geqslant 10$ distinct poles. \label{enum:subdiv2}
\item $f$ has a pole at $\geqslant 5$ rational cusps and at least one non-rational pole. \label{enum:subdiv3}
\end{enumerate}
\end{prop}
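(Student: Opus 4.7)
The plan is to exploit the $\mathrm{PGL}_2(\F_2)$-action on the target $\P^1$. From any nonconstant $g$ one obtains six functions $g$, $g+1$, $1/g$, $(g+1)/g$, $1/(g+1)$, $g/(g+1)$ of the same degree $\leqslant 17$, whose polar divisors are among the three fibers $g^{-1}(0)$, $g^{-1}(1)$, $g^{-1}(\infty)$. I would show that one of these six already meets condition~\ref{enum:subdiv1},~\ref{enum:subdiv2} or~\ref{enum:subdiv3}, by combining a pigeonhole count on rational cusps with a case split on which fibers carry non-rational support.

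The key structural input is that $X_1(37)(\F_2)$ equals the $18$ rational cusps of $X_1(37)$. No $\F_2$-rational non-cusp exists, since the Hasse bound gives $|E(\F_2)| \leqslant 5 < 37$. The $\Q$-rational cusps $C_1,\ldots,C_{18}$ remain rational mod $2$, and the degree-$18$ orbit $C_0$ contributes no $\F_2$-rational point, because Frobenius on $C_0$ is the image of $2$ in $(\Z/37\Z)^{\times}/\{\pm 1\}$ and has order $18$ there: indeed $\mathrm{ord}_{37}(2) = 36$ and $2^{18} \equiv -1 \pmod{37}$. Thus on $X_1(37)_{\F_2}$, "rational pole" equals "rational cusp pole", and the three $\P^1(\F_2)$-fibers of $g$ partition the $18$ rational cusps into sets of sizes $r_\infty + r_0 + r_1 = 18$.

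Write $A,B,C \in \{1,\,{>}1\}$ for whether $\mdeg$ of $g^{-1}(\infty)$, $g^{-1}(0)$, $g^{-1}(1)$ exceeds $1$. A direct computation of the six Möbius transforms shows that the $\mdeg$ values realised are precisely $\max(A,B)$, $\max(A,C)$ and $\max(B,C)$, each attained twice. Hence condition~\ref{enum:subdiv1} is achievable exactly when at most one of $A,B,C$ exceeds $1$. If all three exceed $1$, then pigeonhole yields some $r_a \geqslant 6$ and the transform with polar divisor $g^{-1}(a)$ satisfies condition~\ref{enum:subdiv3}. If exactly two exceed $1$, say $A,B > 1$ and $C=1$, then simultaneously failing condition~\ref{enum:subdiv3} for the two non-rational pole classes and condition~\ref{enum:subdiv2} for the third would force $r_\infty \leqslant 4$, $r_0 \leqslant 4$, $r_1 \leqslant 9$, contradicting $r_\infty+r_0+r_1=18$; the other two sub-cases are symmetric.

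The only step demanding real thought is the structural statement $X_1(37)(\F_2) = \{C_1,\ldots,C_{18}\}$; once that is pinned down, the proposition reduces to an elementary pigeonhole on the triple $(r_\infty, r_0, r_1)$ together with a finite case-split on $(A,B,C)$.
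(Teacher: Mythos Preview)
Your proof is correct and follows essentially the same route as the paper: act by $\mathrm{PGL}_2(\F_2)$ on the target, split according to which of the three fibers contain a non-rational place, and pigeonhole on the partition $r_\infty+r_0+r_1=18$ of the rational cusps. The only difference is that you supply an explicit justification (Hasse bound and the order of $2$ in $(\Z/37\Z)^\times/\{\pm1\}$) for the fact $\#X_1(37)(\F_2)=18$, which the paper simply asserts.
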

\begin{proof}
$X_1(37)$ has 18 $\F_2$-rational places, all of which are cusps.
View $g$ as a morphism $X_1(37)_{\F_2} \to \P^1_{\F_2}$. For all $h \in \aut (\P^1_{\F_2})$ we have
$\deg(g) = \deg(h \circ g)$.  If there is $h \in \aut (\P^1_{\F_2})$
such that $\mdeg (h \circ g) =1$ then take $f=h\circ g$ and we are done.
Now assume that such $h$ does not exist. Then at least two
of the three sets $g^{-1}(\{0\}),g^{-1}(\{1\}),g^{-1}(\{\infty\})$ contain a non-rational place.
If all three do, then the one with the most rational cusps has at least $18/\#\P^1(\F_2)=6>5$
rational cusps and we can take $f=h\circ g$ for some $h \in \aut (\P^1_{\F_2})$.
Otherwise we can assume without loss of generality that $g^{-1}(\{\infty\})$ only contains rational cusps.
If $g^{-1}(\{\infty\})$ contains at least $10$ elements
then we can take $f=g$. If $g^{-1}(\{\infty\})$ contains at most $9$ elements
then $g^{-1}(\{0\}) \cup g^{-1}(\{1\})$ contains at least 
$18 - 9 =9$ rational cusps, so either $g^{-1}(\{0\})$ or $g^{-1}(\{1\})$ contains at least $5$, and we can take $f=1/g$ or $f=1/(1-g)$.
\end{proof}

\subsection{The case $N = 37$ and mdeg = 1} \label{md1}

\begin{prop}
\label{prop2}
Every $f\in \F_2(X_1(37)) - \F_2$ with $\mdeg(f)=1$
has $\deg(f) \geqslant 18$.
\end{prop}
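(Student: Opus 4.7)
The plan is to reduce the proposition to a shortest-vector problem in an explicit lattice of rank $17$, and dispatch that problem by LLL-reduction followed by a finite enumeration. Every $\F_2$-rational place of $X_1(37)$ is a cusp, and (as noted in the proof of Proposition~\ref{prop:subdivision}) there are exactly $18$ of them: they are the mod-$2$ reductions of the $\Q$-rational cusps $C_1,\ldots,C_{18}$, while the remaining $\Q$-orbit $C_0$ stays a single $\F_2$-place of degree $18$ (indeed $2^{18}\equiv -1\pmod{37}$, so $2$ has order $18$ in $(\Z/37)^{*}/\{\pm 1\}$). Under the hypothesis $\mdeg(f)=1$ we therefore have $\Div(f)=\sum_{i=1}^{18} n_i C_i$ with $\sum n_i=0$, and $\deg(f)=\tfrac12\sum|n_i|$, so the proposition is equivalent to the assertion that every nonzero element of
\[
\Lambda \;:=\; \bigl\{\, D\in {\textstyle\bigoplus_{i=1}^{18}}\Z C_i \;:\; \deg D=0,\ D\text{ is principal on }X_1(37)_{\F_2}\,\bigr\}
\]
has $\ell^1$-norm at least $36$.

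To produce a $\Z$-basis of $\Lambda$ I would start from the modular units of Section~\ref{CuspFunctions}: the divisors of $f_2,\ldots,f_{19}$ give an explicit generating set of $\Div(\OurCuspGroup(37))\subseteq\CuspDiv(37)$, and restricting to those $\Z$-combinations whose $C_0$-coefficient is zero gives a rank-$17$ sublattice $\Lambda_M\subseteq\Lambda$. Passing to $\F_2$ can enlarge $\Lambda_M$ because the $2$-part of the cuspidal class group may grow on reduction, so the index $[\Lambda:\Lambda_M]$ must be computed separately: enumerate representatives of the finite group $\bigl(\bigoplus_{i=1}^{18}\Z C_i\cap\CuspDiv^0(37)\bigr)/\Lambda_M$ and for each test principality over $\F_2$ via a Riemann--Roch calculation, adjoining any newly-principal divisor to obtain $\Lambda$.

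With a basis of $\Lambda$ in hand, LLL-reduce it and enumerate all lattice vectors of $\ell^1$-norm at most $34$ (note $\Div(f)$ has even $1$-norm, so this is exactly the range that would contradict the conclusion). The inequality $\|v\|_2\leqslant\|v\|_1$ lets us carry out the enumeration inside an $\ell^2$-ball, which in turn translates to a finite box for the coefficients in the reduced basis. A computer check, included in the Magma files at \cite{URLgonality}, then verifies that the only such vector is $0$, and the proposition follows.

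The main obstacle is controlling the size of the enumeration: a raw basis of $\Lambda_M$ built directly from the modular units contains fairly long vectors, so the $\ell^1$-ball of radius $34$ in $\Lambda$ can meet enormous numbers of $\Z$-combinations, and LLL-reduction is essential rather than merely convenient. A secondary subtlety is certifying that the full lattice $\Lambda$, and not only $\Lambda_M$, has been used; overlooking a characteristic-$2$ principal divisor would produce a spuriously strong lower bound.
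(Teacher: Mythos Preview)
Your reduction to a shortest-$\ell^1$-vector question in the lattice $\Lambda$ of principal divisors supported on the $18$ rational places is exactly the paper's starting point, and your identification of those places with the reductions of the rational cusps (with $C_0$ remaining inert of degree $18$) is correct. Where you diverge is in the execution.

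First, the paper does not build $\Lambda$ from modular units followed by a saturation over $\F_2$; it invokes Magma's {\tt ClassGroup} on the function field directly and takes the kernel of $\Z^{18}\to\Pic X_1(37)_{\F_2}$. Your saturation step (enumerate $(\bigoplus_i\Z C_i)_{\deg 0}/\Lambda_M$ and test each coset for principality) is sound in principle, but that quotient is a piece of the rational-cusp class group of $X_1(37)$, which is not tiny; as written, this enumeration could be the bottleneck.

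Second, and more interestingly, the paper does \emph{not} enumerate the whole $\ell^2$-ball of radius $34$. It observes that if $\deg f\leqslant 17$ (so the positive and negative parts of $\Div(f)$ each have mass $\leqslant 17$) and $\|\Div f\|_2^2>2(14^2+3^2)=410$, then some coefficient must be $\geqslant 15$ in absolute value; hence $f$ or $1/f$ lies in $H^0(15p+q+r)$ for rational places $p,q,r$. So the paper runs {\tt ShortVectors} only up to $\|D\|_2^2\leqslant 410$ (finding that the minimum $\tfrac12\|D\|_1$ there is already $18$), and then disposes of the ``concentrated'' divisors by checking, using the diamond action to fix $p$, that $\dim H^0(15p+q+r)=1$ for all $q,r$. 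Your one-shot $\ell^2$-ball enumeration is logically fine, but the radius is roughly $1.7$ times larger in a rank-$17$ lattice, a factor of several thousand in the number of candidates; the paper's split into ``small $\ell^2$'' plus ``one big pole handled by Riemann--Roch'' is what keeps the verification light.
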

\begin{proof}
Let $M=\Z^{X_1(37)(\F_2)} \subset \Div(X_1(37)_{\F_2})$ be the set of all divisors $D$ with
$\mdeg(D)=1$.  Let $N=\ker( M \to \Pic X_1(37)_{\F_2})$, i.e.
principal divisors in $M$. Magma can compute
$N$ directly from its definition, an impressive feat considering the size of the equation!
First download the file \verb+X1_37_AFF.m+
from our web-page \cite{URLgonality}. It contains the explicit equation for $X_1(37)$ over $\F_2$, and assigns it to
\verb+AFF+
with the Magma command AlgorithmicFunctionField.
\begin{verbatim}
> load "X1_37_AFF.m";
> plc1 := Places(AFF, 1); //18 places of degree 1, all cusps.
> M := FreeAbelianGroup(18);  gen := [M.i : i in [1..18]];
> ClGrp, m1, m2 := ClassGroup(AFF); //takes about 3 hours.
> N := Kernel(Homomorphism(M, ClGrp, gen, [m2(i) : i in plc1]));
\end{verbatim}
Let $\norm \noarg 1$ and $\norm \noarg 2$ be the standard $1$ and $2$ norm on $M$ with respect to the basis $X_1(37)(\F_2)$
(i.e. \verb+plc1+).
For a divisor $D\in N$ with $D = \Div(g)$ we have $\deg(g) = \frac 12 \norm D 1$.
So we need to show that $N$ contains no non-zero $D$ with $\norm D 1  \leqslant  2\cdot 17$.
The following calculation shows that $N$ contains no divisors $D \neq 0$
with $\norm D 2 ^2 \,\leqslant 2(14^2+3^2)=410$ and
$\frac 1 2 \norm D 1 \, \leqslant 17$.
\begin{verbatim}
> //Convert N to a more convenient data-structure.
> N := Lattice(Matrix( [Eltseq(M ! i) : i in Generators(N)] ));
> SV := ShortVectors(N,410);
> Min([&+[Abs(i) : i in Eltseq(j[1])]/2 : j in SV]);
18 1
\end{verbatim}
From this we can conclude two things.
First, there is a function $f$ of degree $18$ with $\mdeg(f)=1$.
We already knew that from our LLL search of ${\rm div}( \CuspGroup(37) )$, but this is nevertheless useful
for checking purposes (see Remark~\ref{remarkcheck} below).
Second, if there is a non-constant function $f$ of degree $\leqslant 17$ and
$\mdeg(f)=1$ then $\norm {\Div f} 2 ^2 \, > 2(14^2+3^2)$ so either $f$ or $1/f$ must
have a pole of order $\geqslant 15$ at a rational point. Then
either $f$ or $1/f$ is in a Riemann-Roch space $H^0(X_1(N)_{\F_2},15p+q+r)$ with $p,q,r$ in $X_1(37)(\F_2)$.
Since the diamond operators act transitively on $X_1(37)(\F_2)$
we can assume without loss of generality that $p$ is the first element of $X_1(37)(\F_2)$ returned by Magma.
The proof of the proposition is then completed with the following computation:

\begin{verbatim}
> p := plc1[1];
> Max([Dimension(RiemannRochSpace(15*p+q+r)) : q,r in plc1]);
1 1 \end{verbatim} \vspace{-10pt} \end{proof}
}

\begin{remark} \label{remarkcheck}
Computer programs could have bugs, so it is reasonable to ask if
Magma really did compute a proof of Proposition~\ref{prop2}.
The best way to check this is with independent verification, using other computer algebra systems.

We computed ${\rm div}(f_k)$, for $k=2,\ldots,\lfloor 37/2 \rfloor + 1$, in Maple with two separate methods. One is based
on determining root/pole orders by high-precision floating point evaluation at points close to the cusps. The second
method is based on Puiseux expansions.  The resulting divisors are the same.
Next, we searched the $\Z$-module spanned by these divisors for vectors with a low 1-norm.
Maple and Magma returned the same results, but what is important to note is that this search (in characteristic 0)
produced the same vectors as the divisors of degree-18 functions (in characteristic 2) that Magma found in the
computation for Proposition~\ref{prop2}.

We made similar checks throughout our work. 
Magma's RiemannRochSpace command never failed to find a function whose existence was known from a computation
with another computer algebra system. The structure of Magma's ClassGroup also matched results from computations in Sage and Maple.

The key programs that the proofs of our lower bounds depend on are 
Magma's RiemannRochSpace program (needed for all non-trivial $N$'s), and ClassGroup program (needed for $N=37$).
We have thoroughly tested these programs, and are confident that they compute correct proofs.
\end{remark}

\subsection{The case $N = 37$ and mdeg $>$ 1} \label{md2}
{
It remains to treat cases 2 and 3 of Proposition \ref{prop:subdivision}.
Let $S_2 \subseteq\F_2(X_1(37)) - \F_2$ be the set of all functions $f$ with $\deg(f) \leqslant 17$ such 
that all poles of $f$ are rational and $f$ has
at least $10$ distinct poles.
Similarly let $S_3 \subseteq\F_2(X_1(37)) -  \F_2$ be the set of all functions $f$ with $\deg(f) \leqslant 17$ such that $f$ has a pole at at least 
$5$ distinct rational points and a pole at at least $1$ non-rational point.
To complete the proof of Proposition \ref{prop:subdivision} we need to show:
\begin{prop}\label{prop:emptySi}
The sets $S_2$ and $S_3$ are empty.
\end{prop}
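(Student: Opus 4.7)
The plan is to reduce Proposition~\ref{prop:emptySi} to a finite collection of Riemann--Roch dimension checks over $\F_2$. If $f \in S_2 \cup S_3$, then its pole divisor $D^+ := \Div_\infty(f)$ is an effective divisor of degree $\leqslant 17$ with prescribed support, and $\dim_{\F_2} H^0(X_1(37)_{\F_2}, D^+) \geqslant 2$ because this space contains both $1$ and $f$. Combined with the monotonicity $H^0(X, D) \subseteq H^0(X, D')$ whenever $D \leqslant D'$, this reduces the problem to verifying $\dim_{\F_2} H^0(X_1(37)_{\F_2}, D) = 1$ for every $D$ in a finite family of \emph{maximal} candidate divisors compatible with the support and degree constraints of $S_2$ and $S_3$.

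For $S_2$ the maximal candidates are $D = \sum_{p \in T} n_p\, p$ with $T \subseteq X_1(37)(\F_2)$ of size $k \in \{10, \ldots, 17\}$, each $n_p \geqslant 1$, and $\sum_p n_p = 17$. The diamond operators form a group of order $18$ acting simply transitively on the $18$ rational cusps, so the enumeration reduces by a factor of $18$. I would iterate over orbit representatives $(T, (n_p))$ in a canonical form (for example, demand that a fixed cusp $p_1$ lies in $T$ and realizes the maximum of the $n_p$), and call Magma's \texttt{RiemannRochSpace} on each $D$.

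For $S_3$, decompose $D^+ = D_r + D_{nr}$ where $D_r$ is supported on at least five rational cusps and $D_{nr}$ contains at least one non-rational place. Since $\deg D_r \geqslant 5$ forces $\deg D_{nr} \leqslant 12$, only non-rational places of $X_1(37)_{\F_2}$ of degree $\leqslant 12$ can appear in $\supp D_{nr}$, and these form a finite, enumerable set (via Magma's \texttt{Places}). I would iterate the pairs $(D_r, D_{nr})$ up to the diamond action (which permutes both the rational cusps and the Frobenius orbits giving the non-rational places), restrict to the maximal $D$'s of total degree $17$, and again invoke \texttt{RiemannRochSpace}.

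The main obstacle I expect is the sheer size of the $S_2$ enumeration: $\sum_{k=10}^{17} \binom{18}{k}\binom{16}{k-1}$ is about $8 \cdot 10^8$ before diamond reduction and still roughly $5 \cdot 10^7$ after, which is too many for independent Magma calls. To make the computation feasible, I would batch by support: for each $T$, compute a basis of $H^0(X_1(37)_{\F_2}, 17\sum_{p \in T} p)$ once and then use its pole-order filtration at each $p \in T$ to read off $\dim H^0(X_1(37)_{\F_2}, D)$ for every relevant sub-divisor in a single pass. Since $\deg D \leqslant 17$ is far below $g(X_1(37)) = 40$, the generic outcome is $\dim H^0 = 1$, so partial monotonicity should let one prune large subfamilies as soon as a single dominating $D$ is verified to satisfy $\dim H^0 = 1$.
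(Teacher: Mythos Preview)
Your reduction is sound: checking $\dim_{\F_2} H^0(X_1(37)_{\F_2},D)=1$ over all maximal candidate pole divisors would indeed prove the proposition, and your batching-by-support idea is a legitimate way to cut the number of Riemann--Roch calls.  The paper, however, takes a sharper route.  Rather than testing $\dim H^0(D)=1$ for each maximal $D$, it fixes a \emph{small} list of dominating divisors $D'$ (much larger than any candidate pole divisor) and verifies that every non-constant $f\in H^0(X_1(37)_{\F_2},D')$ has $\deg f\geqslant 18$.  The point is that one may allow $\dim H^0(D')>1$: a single divisor such as $3\cdot\!\sum_{p\in X_1(37)(\F_2)}p-4p_0$ (degree $50$, Riemann--Roch dimension $\geqslant 11$) dominates \emph{all} $S_2$-candidates of the types $(2,2,2,1),\ldots,(1^7)$ at once, and one just enumerates the finitely many elements of its Riemann--Roch space and checks their degrees.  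In this way $S_2$ is handled by four computations and $S_3$ by eleven, versus the thousands of supports your scheme would loop over.  Your pole-order-filtration idea is really a weak form of the same trick---you also compute a large $H^0$ and then read off information about sub-divisors---but by only extracting dimensions you forfeit the ability to work with $D'$ whose $H^0$ is already non-trivial, which is exactly what makes the paper's enumeration so short.  For $S_3$ both approaches lean on the (empirically discovered) fact that $X_1(37)_{\F_2}$ has no places of degrees $2$--$5$ or $8$, so any non-rational place contributes at least $6$ to the pole degree; you should state this explicitly, since without it the $D_{nr}$-enumeration is not obviously feasible.
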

We will prove this with Magma computations, using ideas similar to those in \cite{scriptie}. To main new idea is in the following definition:
\begin{definition} Let $C$ be a curve over a field $\F$ and $S\subseteq\F(C) - \F$ a set of non-constant functions.
We say that a that a set of divisors $A \subset \Div C$ {\em dominates}
$S$ if for every  $f \in S$ there is a $D\in A$ such that $f \in \aut(\P^1_\F) H^0(C,D) \aut(C)$
(i.e. $f = g \circ f' \circ h$ for some $g \in  \aut(\P^1_\F)$, $f' \in H^0(C,D)$, and $h \in \aut(C)$).
\end{definition}
It follows directly from this definition that $$S \subseteq \bigcup_{D\in A} \aut(\P^1_\F) H^0(C,D) \aut(C)$$
and hence:
\begin{prop}\label{prop:empty}
Let $C$ be a curve over a field $\F$, $S\subseteq\F(C) - \F$ and $A \subset \Div C$.
Suppose that $A$ dominates $S$, and that:
\begin{equation}
\forall_{D \in A} \ S \cap  \aut(\P^1_\F) H^0(C,D) \aut(C) = \emptyset. \label{eq:hypothesis1}
\end{equation} Then $S=\emptyset$.
\end{prop}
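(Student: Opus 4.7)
The plan is to show $S=\emptyset$ by a direct set-theoretic argument that simply unwinds the definition of a dominating set. First I would invoke the observation recorded in the displayed formula immediately preceding the proposition: if $A$ dominates $S$ then
\[
S \subseteq \bigcup_{D \in A} \aut(\P^1_\F) H^0(C,D) \aut(C).
\]
This inclusion is immediate from the definition, since for any $f \in S$ the defining property of a dominating set supplies some $D \in A$ together with a factorization $f = g \circ f' \circ h$ with $g \in \aut(\P^1_\F)$, $f' \in H^0(C,D)$, and $h \in \aut(C)$, and this is exactly the statement that $f$ lies in the $D$-summand of the union.

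Next, I would apply the hypothesis (\ref{eq:hypothesis1}) termwise. Intersecting the containment above with $S$ and distributing the intersection across the union gives
\[
S \;=\; S \cap \bigcup_{D \in A} \aut(\P^1_\F) H^0(C,D) \aut(C) \;=\; \bigcup_{D \in A} \bigl( S \cap \aut(\P^1_\F) H^0(C,D) \aut(C) \bigr),
\]
and by (\ref{eq:hypothesis1}) every term on the right-hand side is empty. Therefore $S$ itself is empty.

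There is no genuine obstacle in this argument: Proposition \ref{prop:empty} is a bookkeeping lemma that packages a proof strategy rather than doing real work. The actual difficulty, deferred to the subsequent subsections, is twofold: producing an explicit dominating set $A$ that is small enough to enumerate, and then certifying condition (\ref{eq:hypothesis1}) for each $D \in A$ by an effective Riemann--Roch computation. For the two sets $S_2$ and $S_3$ of Proposition \ref{prop:emptySi} the dominating set will be constructed from the pole configurations prescribed by Proposition \ref{prop:subdivision}, and the per-$D$ check will be discharged by Magma's \verb+RiemannRochSpace+ routine combined with a diamond-operator symmetry reduction analogous to the one used in Proposition \ref{prop2}.
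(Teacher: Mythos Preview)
Your proposal is correct and matches the paper's approach exactly: the paper does not even write out a separate proof for this proposition, simply recording the inclusion $S \subseteq \bigcup_{D\in A} \aut(\P^1_\F) H^0(C,D) \aut(C)$ and prefacing the statement with ``and hence''. Your unwinding of that word is precisely what is intended.
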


\begin{proof}[Proof of Proposition~\ref{prop:emptySi}]
Appendix~\ref{sec:calculation37} gives
two sets $A_2$ and $A_3$ that dominate $S_2$ and $S_3$ respectively.
The Magma computations given there show that
$$ \forall_{D \in A_2 \cup A_3}
\min \{ \deg(f) \, | \,  f \in H^0(C,D) - \F_2 \} \geqslant 18$$
where $C = X_1(37)_{\F_2}$.
Since $\deg(f)$ is invariant under the actions of $\aut(\P^1_\F)$ and $\aut(C)$ it follows (for $i=2,3$ and $D\in A_i$) that $S_i \cap  
\aut(\P^1_\F) H^0(C,D) \aut(C)=\emptyset$ so we can apply Proposition~\ref{prop:empty}.
\end{proof}

\subsection{The cases $N \leqslant 40$ and $N \neq 37$} \label{Task3}
Subdividing the problem into three smaller
cases as in Proposition~\ref{prop:subdivision} was not necessary for the other $N \leqslant 40$.
Instead we used an easier approach which is similar to the case $N = 37$ and mdeg $>$ 1.


For an integer $N$ let $p_N$ denote the smallest prime $p$ such that $p \nmid N$.
Let $d_N = \deg(g_N)$ denote the degree of the lowest degree function we found for $N$ (Section~\ref{Task1}
or online~\cite{URLgonality}).
Now in order to prove $\gon_\Q (X_1(N)) \geqslant d_N$ we will prove $\gon_{\F_{p_N}} (X_1(N)) \geqslant d_N$.
We have done this by applying Proposition~\ref{prop:empty} directly with $S$ the set of all functions of degree
$< d_N$. To verify hypothesis~(\ref{eq:hypothesis1}) from Proposition \ref{prop:empty} with a computer for $A=\Div^+_{d_N 
-1}(X_1(N)_{\F_{p_N}})$ (i.e. all effective divisors of degree $d_N-1$) was unfeasible in a lot of cases. Instead we
used the following proposition to obtain a smaller set $A$ of divisors that still dominates all functions of degree $< d_N$.

\begin{prop}\label{prop:optimisation1}
Let $C$ be a curve over a finite field $\F_q$ and $d$ an integer. Let $n:=\lceil\# C(\F_q)/(q+1) \rceil$ and $$D=\sum_{p\in C(\F_q)} p$$ then
$$A:=\Div^+_{d-n}(C)+D=\set{ s'+D \mid s' \in \Div^+_{d-n}(C)}$$ dominates all functions of degree $\leqslant d$.
\end{prop}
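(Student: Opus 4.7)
The plan is to apply a pigeonhole argument to each non-constant function $f$ of degree at most $d$. Viewing $f$ as a morphism $C \to \P^1_{\F_q}$, the restriction to $C(\F_q)$ takes values in $\P^1(\F_q)$, a set of cardinality $q+1$. Hence there exists $v \in \P^1(\F_q)$ with
$$|f^{-1}(v) \cap C(\F_q)| \geq \lceil \#C(\F_q)/(q+1) \rceil = n.$$

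The next step is to move the value $v$ to $\infty$. Pick any $g \in \aut(\P^1_{\F_q})$ with $g(\infty) = v$ and set $f' := g^{-1} \circ f$. Then $\deg(f') = \deg(f) \leq d$, and the pole divisor $(f')_\infty$ equals the scheme-theoretic fibre of $f$ over $v$. In particular, each of the (at least) $n$ rational points of $f^{-1}(v)\cap C(\F_q)$ appears in $(f')_\infty$ with multiplicity $\geq 1$.

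It remains to exhibit the divisor in $A$ that bounds the poles of $f'$. Let $s'_0 := ((f')_\infty - D)^+$ denote the positive part of $(f')_\infty - D$. At each $P \in C(\F_q)$ with $f(P) = v$ the coefficient is reduced by $1$, so $s'_0$ is effective with
$$\deg(s'_0) \;=\; \deg\bigl((f')_\infty\bigr) \;-\; |f^{-1}(v) \cap C(\F_q)| \;\leq\; d - n.$$
Enlarge $s'_0$ by adding an arbitrary effective divisor of the missing degree to obtain some $s' \in \Div^+_{d-n}(C)$. By construction $s' + D \geq s'_0 + D \geq (f')_\infty$, hence $f' \in H^0(C, s' + D)$. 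Therefore $f = g \circ f' \in \aut(\P^1_{\F_q})\, H^0(C, s' + D)\, \aut(C)$ with $s' + D \in A$, which is the required domination condition.

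No step here looks like a genuine obstacle: the whole content is the pigeonhole observation on the map $C(\F_q) \to \P^1(\F_q)$, together with routine bookkeeping of divisors. The only tacit point is that when $d < n$ there are no functions of degree $\leq d$ to dominate in the first place (the pigeonhole value would force a fibre of degree $\geq n > d$), so the statement is vacuous in that range and the arithmetic $d - n \geq 0$ is never needed.
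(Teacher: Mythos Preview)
Your proof is correct and follows essentially the same approach as the paper's: the pigeonhole argument on $C(\F_q) \to \P^1(\F_q)$, followed by composing with an automorphism of $\P^1_{\F_q}$ to move the crowded fibre to $\infty$, and then reading off an effective divisor $s'$ of degree $\leqslant d-n$ from the pole divisor. Your divisor bookkeeping with $s'_0 = ((f')_\infty - D)^+$ is a slightly more explicit version of the paper's one-line conclusion ``$\Div(g\circ f) \geqslant -s - D$ for some $s \in \Div^+_{d-n}(C)$'', but the content is identical.
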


\begin{proof}
For all $f\colon C \to \P^1_{\F_q}$ we have $f(C(\F_q))\subseteq \P^1(\F_q)$.  By the pigeon hole principle, there
is a point $p$ in $\P^1(\F_q)$ whose pre-image under $f$ has at least $n$ points in $C(\F_q)$. Moving $p$ to $\infty$ with
a suitable $g \in \aut(\P^1_{\F_q})$, the function $g \circ f$ has at least $n$ distinct poles in $C(\F_q)$.
So if $\deg(f) \leqslant d$
then $\Div(g \circ f) \geqslant -s -D$ for some $s \in \Div^+_{d-n}(C)$.
\end{proof}

Proposition~\ref{prop:optimisation1} reduces the number of divisors to check, but increases their degrees.
However, for our case $C=X_1(N)$ the gonality is generally much lower then the genus, 
so the Riemann-Roch spaces from equation~(\ref{eq:hypothesis1}) are still so small that it is
no problem to enumerate all their elements, and compute their degrees to show 
$S \cap  \aut(\P^1_\F) H^0(C,D) \aut(C) = \emptyset$.

As a further optimization we can make
$A$ even smaller by using the orbits
under diamond operators.
The Magma computations~\cite{URLgonality} show that hypothesis~(\ref{eq:hypothesis1})
in Proposition \ref{prop:empty} is satisfied for $S$, the set of functions of degree $< d_N $ in
$\F_{p_N}(X_1(N)) - \F_{p_N}$, and $A$, an explicit set 
of divisors dominating $S$.

Despite all our tricks to reduce the number of divisors, the number of divisors for $N=37$ (due to its high gonality)
remained far too high for our computers, specifically, divisors consisting of rational places.
We handled those by using the relations between rational places in the Jacobian.
That idea (worked out in Section~\ref{md1})
allowed us to complete $N=37$ and thus all $N \leqslant 40$.

\section{Patterns in the gonality data}
\label{Patterns}
\begin{definition}\label{def:improvement}
Let $\Gamma \subseteq \PSL_2(Z)$ be a congruence subgroup
and $X(\Gamma):= \halfplane^* /\Gamma$ be the corresponding modular curve over $\C$.
The {\em improvement factor} of a function $f \in \C(X(\Gamma)) - \C$ is the ratio $$[\PSL_2(\Z) :\Gamma]/\deg(f)=\deg(j)/\deg(f).$$
\end{definition}
The definition is motivated by a well known bound from Abramovich:
\begin{theorem}[\cite{abramovich}]
\label{BoundAbr}
$$\gon_\C(X(\Gamma)) \geqslant \frac{\lambda}{24} [\PSL_2(\Z) :\Gamma]$$
where $\lambda = 0.21$ (a lower bound for $\lambda_1$).
Kim and Sarnak \cite{KimSarnak} improved this to $\lambda={975}/{4096}$
(which is close to the $1/4$ from Selbergs conjecture).
\end{theorem}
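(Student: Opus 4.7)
The plan is to prove the bound as a consequence of two essentially independent ingredients: a Li-Yau style conformal volume inequality relating the degree of a map to $\P^1$ with the first nonzero Laplace eigenvalue on the source, and a Selberg-type lower bound on $\lambda_1$ for congruence quotients of $\halfplane$.

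First, I would equip $X(\Gamma)$ with the hyperbolic metric induced from $\halfplane$ and view a non-constant $f \in \C(X(\Gamma))$ of degree $d$ as a degree-$d$ conformal branched cover $X(\Gamma) \to \P^1 \cong S^2$. Embed $S^2$ as the unit sphere in $\R^3$; the three coordinate functions pull back along $f$ to functions on $X(\Gamma)$, and by precomposing $f$ with a suitable M\"obius transformation of $\P^1$ one arranges that each pullback is $L^2$-orthogonal to the constants. Plugging these three test functions into the variational characterization of $\lambda_1$ and summing, together with the conformal invariance of the Dirichlet energy in dimension $2$, yields the Li-Yau inequality
$$\lambda_1(X(\Gamma)) \cdot V(X(\Gamma)) \;\leqslant\; 8\pi \, d,$$
where $V$ denotes hyperbolic area. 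Since the standard fundamental domain for $\PSL_2(\Z)$ on $\halfplane$ has area $\pi/3$ and $X(\Gamma) \to X(\PSL_2(\Z))$ has orbifold degree $[\PSL_2(\Z) : \Gamma]$, one has $V(X(\Gamma)) = \frac{\pi}{3}[\PSL_2(\Z) : \Gamma]$, and rearranging produces
$$d \;\geqslant\; \frac{\lambda_1(X(\Gamma))}{24}\,[\PSL_2(\Z) : \Gamma].$$

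To close the argument I would feed in an unconditional lower bound on $\lambda_1(X(\Gamma))$ valid for all congruence $\Gamma$: Selberg's $3/16$ theorem already gives a nontrivial constant, the Luo-Rudnick-Sarnak bound $\lambda_1 \geqslant 21/100$ is what produces the stated $\lambda = 0.21$, and the Kim-Sarnak bound $\lambda_1 \geqslant 975/4096$ gives the sharpest explicit value. The main obstacle is not in the geometric step — the Li-Yau inequality and the area computation are classical — but in the arithmetic input: Selberg's conjecture $\lambda_1 \geqslant 1/4$ remains open, and each explicit improvement of $\lambda$ rests on deep progress on Ramanujan-type bounds for $\mathrm{GL}_2$ Maass forms via Langlands functoriality. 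Any future progress toward $1/4$ would immediately sharpen Theorem~\ref{BoundAbr} through exactly this route.
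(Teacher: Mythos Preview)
The paper does not give its own proof of this theorem; it is quoted as a result of Abramovich \cite{abramovich} (with the improved constant attributed to Kim--Sarnak \cite{KimSarnak}), so there is no in-paper argument to compare against. Your sketch is exactly the route Abramovich takes: the Li--Yau/Yang--Yau conformal-volume inequality $\lambda_1 \cdot V \leqslant 8\pi d$ together with $V(X(\Gamma)) = \tfrac{\pi}{3}[\PSL_2(\Z):\Gamma]$, and then an unconditional lower bound on $\lambda_1$ for congruence quotients.

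One technical point you glossed over and that Abramovich actually has to address: the hyperbolic metric on $\halfplane/\Gamma$ is complete but non-compact (cusps), while the compactified curve $X(\Gamma)$ carries no smooth hyperbolic metric at the cusps (and has orbifold points when $\Gamma$ has torsion). The Li--Yau argument is stated for closed Riemannian surfaces, so one must either work on the open quotient and invoke the spectral theory there (continuous spectrum starting at $1/4$, so any $\lambda_1 < 1/4$ is a genuine discrete eigenvalue and the Rayleigh-quotient argument still goes through), or approximate by truncating the cusps. This is not a serious obstacle, but it is the one place where the proof is more than a two-line computation.
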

The theorem says that an improvement factor can not exceed $24/\lambda \approx 100.825$, for any $\Gamma$, even over $\C$.
To compare this with $X_1(N)$ (over $\Q$), we plotted the improvement 
factors of our $g_N$'s from~\cite{URLgonality}.
This revealed a remarkable structure: \\

\begin{tikzpicture}
    \begin{axis}[
        xlabel=$level$,
        ylabel=$improvement factor$,
        legend pos=south east,
        only marks]
        
    \addplot 
       plot coordinates {
        (25, 60)
        (49, 56)
        (50, 60)
        (75, 60)
        (98, 56)
        (100, 60)
        (121, 55)
        (125, 60)
        (147, 56)
        (150, 60)
        (169, 273/5)
        (175, 60)
        (196, 56)
        (200, 60)
        (225, 60)
        (242, 55)
        (245, 56)
        (250, 60)
        (275, 60)
        (289, 272/5)
        (294, 56)
        (300, 60)
    };
    \addlegendentry{Big square factor}

    \addplot 
        plot coordinates {
            (2, 3)
            (3, 4)
            (5, 12)
            (7, 24)
            (11, 30)
            (13, 42)
            (17, 36)
            (19, 36)
            (23, 264/7)
            (29, 420/11)
            (31, 40)
            (37, 38)
            (41, 420/11)
            (43, 77/2)
            (47, 1104/29)
            (53, 1404/37)
            (59, 870/23)
            (61, 1860/49)
            (67, 1122/29)
            (71, 420/11)
            (73, 1332/35)
            (79, 1560/41)
            (83, 574/15)
            (89, 495/13)
            (97, 1568/41)
            (101, 5100/133)
            (103, 5304/139)
            (107, 954/25)
            (109, 495/13)
            (113, 6384/167)
            (127, 8064/211)
            (131, 572/15)
            (137, 1564/41)
            (139, 420/11)
            (149, 3700/97)
            (151, 11400/299)
            (157, 12324/323)
            (163, 1107/29)
            (167, 13944/365)
            (173, 3741/98)
            (179, 267/7)
            (181, 420/11)
            (191, 9120/239)
            (193, 2328/61)
            (197, 4851/127)
            (199, 6600/173)
            (211, 420/11)
            (223, 1184/31)
            (227, 8588/225)
            (229, 8740/229)
            (233, 3016/79)
            (239, 420/11)
            (241, 29040/761)
            (251, 420/11)
            (257, 33024/865)
            (263, 5764/151)
            (269, 3015/79)
            (271, 18360/481)
            (277, 12788/335)
            (281, 420/11)
            (283, 40044/1049)
            (293, 10731/281)
        };
    \addlegendentry{Prime}
        \addplot 
            plot coordinates {
                (4, 6)
                (6, 12)
                (10, 36)
                (14, 36)
                (22, 45)
                (26, 42)
                (34, 216/5)
                (38, 45)
                (46, 792/19)
                (58, 1260/31)
                (62, 40)
                (74, 684/17)
                (82, 1260/29)
                (86, 693/16)
                (94, 3312/83)
                (106, 1404/35)
                (118, 87/2)
                (122, 5580/139)
                (134, 561/13)
                (142, 40)
                (146, 999/23)
                (158, 40)
                (166, 861/20)
                (178, 5940/137)
                (194, 1764/41)
                (202, 7650/191)
                (206, 7956/199)
                (214, 477/11)
                (218, 3564/89)
                (226, 1596/37)
                (254, 24192/605)
                (262, 6435/149)
                (274, 14076/325)
                (278, 345/8)
                (298, 8325/208)
            };
    \addlegendentry{2 $\times$ Prime }
            \addplot 
                plot coordinates {

(1, 1)
(8, 24)
(9, 36)
(12, 48)
(15, 48)
(16, 48)
(18, 54)
(20, 48)
(21, 48)
(24, 48)
(27, 54)
(28, 48)
(30, 48)
(32, 48)
(33, 48)
(35, 48)
(36, 54)
(39, 48)
(40, 48)
(42, 48)
(44, 48)
(45, 48)
(48, 48)
(51, 48)
(52, 48)
(54, 54)
(55, 48)
(56, 48)
(57, 48)
(60, 48)
(63, 48)
(64, 48)
(65, 48)
(66, 48)
(68, 48)
(69, 48)
(70, 48)
(72, 54)
(76, 48)
(77, 48)
(78, 48)
(80, 48)
(81, 54)
(84, 48)
(85, 48)
(87, 48)
(88, 48)
(90, 54)
(91, 48)
(92, 48)
(93, 48)
(95, 48)
(96, 384/7)
(99, 48)
(102, 48)
(104, 48)
(105, 48)
(108, 54)
(110, 48)
(111, 48)
(112, 48)
(114, 48)
(115, 48)
(116, 48)
(117, 48)
(119, 48)
(120, 48)
(123, 48)
(124, 48)
(126, 54)
(128, 384/7)
(129, 48)
(130, 48)
(132, 48)
(133, 48)
(135, 54)
(136, 48)
(138, 48)
(140, 48)
(141, 48)
(143, 48)
(144, 54)
(145, 48)
(148, 48)
(152, 48)
(153, 48)
(154, 48)
(155, 48)
(156, 48)
(159, 48)
(160, 1152/23)
(161, 48)
(162, 54)
(164, 48)
(165, 48)
(168, 48)
(170, 48)
(171, 48)
(172, 48)
(174, 48)
(176, 48)
(177, 48)
(180, 54)
(182, 48)
(183, 48)
(184, 48)
(185, 48)
(186, 48)
(187, 48)
(188, 48)
(189, 54)
(190, 48)
(192, 384/7)
(195, 48)
(198, 54)
(201, 48)
(203, 48)
(204, 48)
(205, 48)
(207, 48)
(208, 48)
(209, 48)
(210, 48)
(212, 48)
(213, 48)
(215, 48)
(216, 54)
(217, 48)
(219, 48)
(220, 48)
(221, 48)
(222, 48)
(224, 48)
(228, 48)
(230, 48)
(231, 48)
(232, 48)
(234, 54)
(235, 48)
(236, 48)
(237, 48)
(238, 48)
(240, 48)
(243, 54)
(244, 48)
(246, 48)
(247, 48)
(248, 48)
(249, 48)
(252, 54)
(253, 48)
(255, 48)
(256, 384/7)
(258, 48)
(259, 48)
(260, 48)
(261, 48)
(264, 48)
(265, 48)
(266, 48)
(267, 48)
(268, 48)
(270, 54)
(272, 48)
(273, 48)
(276, 48)
(279, 48)
(280, 48)
(282, 48)
(284, 48)
(285, 48)
(286, 48)
(287, 48)
(288, 384/7)
(290, 48)
(291, 48)
(292, 48)
(295, 48)
(296, 48)
(297, 54)
(299, 48)
                };
  \addlegendentry{The rest}
 \end{axis}
\end{tikzpicture}

What immediately pops out is that our best improvement factor is often 48 (in 151 out of 300 levels $N$).
Levels $N>9$ with an improvement factor $<48$ are either of the form $N = p$ or $N = 2p$ for a prime $p$.
For prime levels, our improvement factor converges to $420/11$.

Levels of the form $N = kp^2$ with $p>3$ prime stand out in the graph, with improvement factors significantly higher than 48.
To explain this, first observe that improvement factors for $kp^2$ are $\geqslant$ those of $p^2$ because:

\begin{remark}
If $\Gamma \subseteq \Gamma'$ are two congruence subgroups, $\pi: X(\Gamma) \to X(\Gamma')$ denotes the quotient map and $f \in \C(X(\Gamma'))$ then 
$f$ and $f\circ \pi$ have the same improvement factor. So improvement factors for $X(\Gamma')$ can not exceed those for $X(\Gamma)$.
\end{remark}

It remains to explain the high observed improvement factors at levels $N = p^2$:

\begin{center}
\begin{tabular}{|c|c|c|c|c|c|}
\hline
level & $5^2$ & $7^2$ & $11^2$ & $13^2$ & $17^2$ \\
improvement & 60 & 56 & 55 & 54 $\frac 3 5$ & $54 \frac 2 5$ \\ \hline   
\end{tabular}
\end{center}

The best (lowest degree, highest improvement factor) modular units $g_N$ we found for these five cases turned out to be
invariant under a larger congruence subgroup $\Gamma_0(p^2) \cap \Gamma_1(p) \supseteq \Gamma_1(p^2)$.
Now $$\Gamma_0(p^2) \cap \Gamma_1(p) = \left[ \begin{array}{rr} 1 & 0 \\ 0 & p\end{array} \right] \Gamma(p) \left[ \begin{array}{rr} 1 & 0 \\ 0 & p\end{array} \right]^{-1}.$$
This suggests to look at $X(p)$ to find high improvement factors for $X_1(p^2)$.

\section{Points of degree 5, 6, 7, and 8 on $X_1(N)$}
The values of $N$ for which the curve $X_1(N)$ has infinitely many places of degree $d$ over $\Q$
are known for $d=1$ (\cite{mazur}: $N=$1..10,12), $d=2$ (\cite{d2}: $N=$1..16,18), $d=3$ (\cite{d3}: $N=$1..16,18,20) and $d=4$
(\cite{d4}: $N=$1..18,20..22,24).
In this section, we extend this to $d \leqslant 8$.

\begin{theorem}\label{rat_points} Let $5 \leqslant d \leqslant 8$, then
$X_1(N)$ has infinitely many places of degree $d$  over $\Q$ if and only if
\begin{itemize}
\item for $d=5$: $N \in \{1,\ldots,25\} - \{23\}$.
\item for $d=6$: $N \in \{1,\ldots,30\} - \{23, 25, 29\}$.
\item for $d=7$: $N \in \{1,\ldots,30\} - \{25, 29\}$.
\item for $d=8$: $N \in \{1,\ldots,28, 30, 32, 36\}$.
\end{itemize}
\end{theorem}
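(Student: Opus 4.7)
The plan is to apply the now-standard criterion of Faltings, refined by Frey, Abramovich, Harris, and Silverman: a curve $C/\Q$ admits infinitely many closed points of degree $d$ over $\Q$ iff there exists a non-constant morphism $\phi : C \to C'$ over $\Q$ of degree exactly $d$ with $C'(\Q)$ infinite, i.e.\ either $C' = \P^1_\Q$ or $C'$ is an elliptic curve of positive Mordell--Weil rank over $\Q$. In the forward direction Hilbert irreducibility ensures that generic fibers of $\phi$ over $C'(\Q)$ are irreducible degree-$d$ places; conversely, any infinite family of degree-$d$ places on $C$ must arise this way via the Abel--Jacobi image of $C^{(d)}$ in $J(C)$. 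This reduces Theorem~\ref{rat_points} to two kinds of verification: producing such morphisms for each $N$ in the ``yes'' list, and proving their non-existence for each $N$ outside it.

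For the ``if'' direction I would treat each listed $N$ as follows. When $\gon_\Q(X_1(N)) = d$---for instance $N \in \{19, 25\}$ for $d = 5$, $N = 23$ for $d = 7$, or $N \in \{32, 36\}$ for $d = 8$---the explicit $g_N$ from Section~\ref{CuspFunctions} and \cite{URLgonality} supplies the required degree-$d$ map to $\P^1_\Q$. When $\gon_\Q(X_1(N)) < d$ (e.g.\ $N = 24$ for $d = 5$, or $N \in \{26, 27, 28, 30\}$ for $d = 8$), I would either exhibit a modular unit of degree exactly $d$ by searching the $\Z$-module spanned by the $f_k$'s for a short vector of $1$-norm $2d$, or produce a degree-$d$ map $X_1(N) \to E$ onto a positive-rank elliptic quotient of $J_1(N)$; such quotients correspond to weight-$2$ newforms at levels dividing $N$ with rational Hecke eigenvalues, and their Mordell--Weil ranks are recorded in LMFDB. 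This reduces each case to a small explicit computation.

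For the ``only if'' direction, for each $N$ outside the listed set I must exclude both types of morphism. Maps of degree $\leq d$ to $\P^1$ are ruled out by gonality lower bounds: for excluded $N \leq 40$ (for example $N = 23$ at $d = 5$ with $\gon_\Q = 7$, or $N = 29$ at $d = 6$ with $\gon_\Q = 11$), Table~1 proven in Section~\ref{lower} gives the required strict inequality; for $N > 40$ one combines Abramovich's bound (Theorem~\ref{BoundAbr}) with the formula $[\PSL_2(\Z) : \Gamma_1(N)] = \tfrac{N^2}{2} \prod_{p \mid N}(1 - 1/p^2)$. Maps of degree $\leq d$ to positive-rank elliptic curves factor through the isogeny decomposition of $J_1(N)$; one enumerates its positive-rank elliptic quotients $E$ and lower-bounds the minimal degree of each morphism $X_1(N) \to E$ using modular degrees and divisor-class comparisons. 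The main obstacle is this last step, especially for $N > 40$: Abramovich's bound alone does not yield $\gon_\C(X_1(N)) > 8$ for every $N > 40$ (e.g.\ $N = 42$ gives only $\gon_\C \geq 6$), so one must either establish ad hoc gonality lower bounds $N$ by $N$, or develop a uniform argument excluding low-degree covers $X_1(N) \to E$ onto positive-rank elliptic quotients across the entire family.
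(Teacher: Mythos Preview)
Your proposal rests on a criterion that is not a theorem. The statement ``$C$ has infinitely many degree-$d$ places iff there is a degree-$d$ morphism to $\P^1$ or to a positive-rank elliptic curve'' was conjectured by Abramovich--Harris but is false in general (Debarre--Fahlaoui constructed counterexamples). What Faltings and Frey actually give is weaker: infinitely many degree-$d$ points force the image of $C^{(d)}$ in $J(C)$ to contain a translate of a positive-dimensional abelian subvariety, and Frey deduces from this only that $d \geqslant \gon_\Q(C)/2$. The paper's proof therefore does not attempt to classify low-degree covers of elliptic curves at all. Instead it splits the $N$'s into two classes. For $N \leqslant 66$ with $N \notin \{37,43,53,57,58,61,63,65\}$, a Magma computation shows $L(J_1(N),1) \neq 0$, so by Kato $J_1(N)(\Q)$ is finite; then a pigeonhole argument gives the clean equivalence ``infinitely many degree-$d$ places $\Leftrightarrow$ a degree-$d$ function to $\P^1$'', with no elliptic curves entering. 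For the remaining $N$ (those with positive-rank Jacobian, and all $N>66$) the paper proves $\gon_\Q(X_1(N)) \geqslant 17$ case by case (Table~1 for $N=37$; ad hoc Magma runs for $43,53,57$; the map to $X_1(29)$ for $N=58$; Abramovich for $61,63,65$ and $N>66$), and then Frey's $d < \gon/2$ bound finishes those $N$ for all $d \leqslant 8$ at once.

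There is also a concrete case your outline misses entirely: $N=25$ with $d=6,7$. Here $\gon_\Q(X_1(25)) = 5$, so gonality lower bounds cannot exclude degree-$6$ or degree-$7$ maps to $\P^1$; you must show that $\Q(X_1(25))$ has functions of degree $5$ but none of degree $6$ or $7$. The paper handles this with a separate technical proposition (Proposition~\ref{technical_proposition}) comparing $W_d^1$ and $W_d^2$ over $\Q$ and over $\F_2$, using injectivity of $J_1(25)(\Q) \to J_1(25)(\F_2)$ and explicit enumeration of $W_5^1(\F_2)$ and $W_6^1(\F_2)$. Your framework, which only contrasts $d$ against the gonality, has no mechanism for this.
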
  
The case $X_1(25)$ is the most interesting
because its set of non-cuspidal places of degree $d = 6, 7$ is finite\footnote{and not empty, we found an example for $d=6$ and $d=7$ \cite{URLgonality}}
even though this exceeds the $\Q$-gonality of $X_1(25)$!
The remainder of this section contains the proof of Theorem~\ref{rat_points}.

\begin{lemma}\label{finitely_many_deg_d_criterion}
\begin{enumerate}
\item Let $C/\Q$ be a curve. If $C$ has a function $f$ over $\Q$
of degree~$d$ then $C$ has infinitely many places of degree $d$ over $\Q$.
\item If the Jacobian $J(C)(\Q)$ is finite, then the converse holds as well. To be precise,
if $C$ has more than $\# J(C)(\Q)$ places of degree $d$, then $\Q(C)$ contains a function of degree $d$.
\item If $N \leqslant 66$ and $N \neq 37,43,53,57,58,61,63,65$ then $J_1(N)(\Q)$ is finite.
\item If $N > 66$ or $N = 37,43,53,57,58,61,63,65$ then $X_1(N)$ has finitely many places of degree $\leqslant 8$.
\end{enumerate}
\end{lemma}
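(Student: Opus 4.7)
My plan is to treat the four parts separately, using (1) and (2) as generic facts about curves and reducing (3) and (4) to standard information about $J_1(N)$.

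For part (1), I would invoke Hilbert's irreducibility theorem. A function $f \in \Q(C)$ of degree $d$ realizes a dominant morphism $C \to \P^1_\Q$, and $\Q(C)/\Q(f)$ has degree $d$; since $C$ is geometrically irreducible, the Galois group of its Galois closure acts transitively on the $d$ embeddings. Hilbert irreducibility then yields infinitely many $t \in \Q$ for which the specialized Galois action remains transitive, i.e.\ $f^{-1}(t)$ is a single closed point of degree $d$, giving infinitely many places of degree $d$. For part (2), each place $P$ of degree $d$ defines a class $[P] \in \Pic^d(C)(\Q)$, which is a torsor under $J(C)(\Q)$ of size $\#J(C)(\Q)$ once non-empty. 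Given more than $\#J(C)(\Q)$ distinct such places, two classes must coincide, $[P_i]=[P_j]$ with $i \neq j$, so there exists $f \in \Q(C)^*$ with $\Div(f) = P_i - P_j$; the pole divisor is the single closed point $P_j$ of degree $d$, hence $\deg(f)=d$.

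For part (3), I would use the isogeny decomposition $J_1(N) \sim \prod_f A_f$ indexed by Galois orbits of weight-$2$ newforms of some level $M \mid N$ and Dirichlet character of conductor dividing $N$. By Mordell--Weil, finiteness of $J_1(N)(\Q)$ is equivalent to finiteness of every $A_f(\Q)$, and for each factor this is implied by $L(A_f,s)$ having analytic rank $0$ at $s=1$ (Kolyvagin--Gross--Zagier when $\dim A_f = 1$, and Kato's Euler systems for general modular abelian varieties). A table lookup in LMFDB (or a direct modular-symbol computation in Sage or Magma) then verifies that positive analytic rank first occurs in the range $N \leqslant 66$ at exactly the eight listed exceptional levels.

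For part (4), I would combine Abramovich's bound (Theorem~\ref{BoundAbr}) with a Frey-type dichotomy: if $C/\Q$ has infinitely many places of degree $\leqslant d$, then either $\gon_\Q(C) \leqslant 2d$, or $W_d(C)$ contains a translate of a positive-rank abelian subvariety of $J(C)$, which in the simplest case means $C$ admits a non-constant $\Q$-map to an elliptic curve of positive rank. For $N > 66$ the index $[\PSL_2(\Z):\Gamma_1(N)]$ is large enough that Theorem~\ref{BoundAbr} forces $\gon_\C(X_1(N)) > 16$, ruling out the first alternative for $d=8$; for the second alternative, I would consult the decomposition of $J_1(N)$ via LMFDB and check, for the exceptional $N \in \{37,43,53,57,58,61,63,65\}$ and for each $N > 66$ whose Jacobian has a low-dimensional positive-rank factor, that no translate of such a factor embeds inside $W_8(X_1(N))$. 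I expect this last case analysis to be the main obstacle: unlike the uniform Abramovich estimate, it requires controlling the dimensions and ranks of all sub-abelian varieties of $J_1(N)$ of dimension $\leqslant 8$, and in some cases exhibiting explicit reasons why $W_8$ cannot absorb them.
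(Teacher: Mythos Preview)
Your treatment of parts (1)--(3) is essentially the paper's: Hilbert irreducibility for (1), pigeonhole in $\Pic^d$ for (2), and Kato's theorem applied after checking $L(J_1(N),1)\neq 0$ for (3). The paper checks non-vanishing of the whole $L$-value in Magma rather than factor by factor, but this is cosmetic.

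Part (4) is where your proposal diverges from the paper and runs into trouble. You invoke a Faltings-style dichotomy (either $\gon_\Q(C)\le 2d$ or $W_d$ contains a translate of a positive-rank abelian subvariety) and then propose to rule out the second branch case by case. But that second branch is an \emph{infinite} list: for every $N>66$ you would have to analyse the positive-rank factors of $J_1(N)$ and their possible embeddings in $W_8$, and you yourself flag this as ``the main obstacle''. There is no uniform argument in your outline that disposes of all large $N$ at once.

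The paper avoids this entirely by using Frey's theorem in the form: a curve $C/\Q$ with $C(\Q)\neq\emptyset$ has only finitely many places of degree $<\gon_\Q(C)/2$. This already subsumes the abelian-subvariety obstruction, so once $\gon_\Q(X_1(N))\geqslant 17$ is known, the conclusion is immediate and uniform in $N$. Abramovich's bound (Theorem~\ref{BoundAbr}) supplies $\gon\geqslant 17$ for $N=61,63,65$ and all $N>66$; for $N=37$ one has the computed $\gon=18$ from Table~1; for $N=43,53,57$ the paper runs a separate $\F_p$-gonality computation (the Abramovich bound is too weak there); and $N=58$ is handled not by a gonality bound at all but by pushing degree-$\leqslant 8$ places forward along $X_1(58)\to X_1(29)$ and using that $X_1(29)$ has finitely many places of degree $<11$ by parts (2)--(3). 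Your outline misses both the need for the auxiliary gonality computations at $43,53,57$ and the covering trick at $58$.
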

\begin{proof}

\begin{enumerate}
\item Hilbert's irreducibility theorem shows that there are infinitely many places of degree $d$
among the roots of $f-q=0$, $q \in \Q$.
\item If $n = \#J(C)(\Q) < \infty$ and $P_1,\ldots,P_{n+1}$ are distinct places of degree $d$, then by the pigeon hole principle,
there exist $i \neq j$ with $P_i-P_1 \sim P_j - P_1$.
The function giving this linear equivalence has degree $d$.
\item Magma has a provably correct algorithm to determine if $L(J_1(N),1)$ is $0$ or not. It shows $L(J_1(N),1)\neq 0$ for each
$N$ in item~3. By a result of Kato this implies that $J_1(N)(\Q)$ has rank zero and hence is finite.
\item   
The case $N=58$ follows from the map $X_1(58) \to X_1(29)$ and the fact that $X_1(29)$ has only finitely many points of degree $< 11$ (by items~3,~2
and Table~1).
$\gon_{\Q}(X_1(37))=18$, and a computation (available at \cite{URLgonality}) that followed the strategy from Appendix~A
showed $\gon_{\Q}(X_1(N)) \geqslant 17$ for $N = 43, 53, 57$
(these bounds are surely not sharp, but proving a higher bound is a lot more work).
Theorem~\ref{BoundAbr} implies $\gon_{\Q}(X_1(N)) \geqslant 17$ for $N = 61,63,65$ and every $N>66$.
Now item~4 follows from the main theorem of \cite{frey} which states that a
curve $C/\Q$ with $C(\Q) \neq \emptyset$ has finitely many places of degree $< \gon_{\Q}(C)/2$.
\end{enumerate}
\end{proof}

\noindent Items 4, 3, 2, and 1 of Lemma~\ref{finitely_many_deg_d_criterion} reduce Theorem~\ref{rat_points} step by step to:
\begin{prop}\label{function_degrees}
Let $5 \leqslant d \leqslant 8$. Then $X_1(N)$ has a function over $\Q$ of degree $d$
if and only if $N$ is as in Theorem~\ref{rat_points}.
\end{prop}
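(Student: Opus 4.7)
The plan is to prove both directions separately, using the reduction from Lemma~\ref{finitely_many_deg_d_criterion}.

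For the ``if'' direction, for each listed pair $(N, d)$ one exhibits a function on $X_1(N)/\Q$ of degree exactly $d$. When $\gon_\Q(X_1(N)) = d$, the gonality witness $g_N$ of Section~\ref{Task1} works directly. When $\gon_\Q(X_1(N)) < d$ and $N \leqslant 10$, the curve $X_1(N)$ is rational so functions of every degree are trivially available. For the intermediate $N$ with $1 < \gon_\Q(X_1(N)) < d$, one produces a modular unit of degree $d$ as a $\Z$-linear combination of the basis divisors $\Div(f_2), \ldots, \Div(f_{\lfloor N/2 \rfloor + 1})$ of $\Div(\OurCuspGroup(N))$ with $1$-norm equal to $2d$; explicit such combinations for every listed $(N, d)$ are recorded in our database~\cite{URLgonality}.

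For the ``only if'' direction, for $N \leqslant 40$ not in the list a direct inspection of Table~1 gives $\gon_\Q(X_1(N)) > d$, which rules out any function of degree $d$. This handles all unlisted $N \leqslant 40$ except the striking case $X_1(25)$ at $d \in \{6, 7\}$, treated below. For $41 \leqslant N \leqslant 66$ not among the eight levels of Lemma~\ref{finitely_many_deg_d_criterion}(4), the Kim--Sarnak form of Theorem~\ref{BoundAbr} delivers $\gon_\Q(X_1(N)) > 8$ for primes and most composites in this range; for the residual borderline composite levels (e.g.\ $N \in \{42, 44, 46, 48\}$, where Abramovich alone is insufficient), we apply the computation of Section~\ref{Task3} at the smallest good prime $p \nmid N$ and use~(\ref{CompareGon}) to certify $\gon_\Q(X_1(N)) \geqslant \gon_{\F_p}(X_1(N)) > 8$. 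All remaining $N$ (either $N > 66$, or $N$ among the eight special levels) are killed outright by item~4 of Lemma~\ref{finitely_many_deg_d_criterion}.

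The main obstacle is the exceptional case $N = 25$ with $d \in \{6, 7\}$, where $\gon_\Q(X_1(25)) = 5 \leqslant d$ so gonality gives no information. The plan is to show directly, via the strategy of Section~\ref{md2}, that no function of degree $d$ exists. For each $d \in \{6, 7\}$, one enumerates a small set $A_d$ of effective divisors of degree $d$ that dominates the set of candidate functions, using Proposition~\ref{prop:optimisation1} and the diamond-operator action to keep $A_d$ tractable; for each $D \in A_d$ a Magma \verb+RiemannRochSpace+ check verifies that every non-constant $f \in H^0(D)$ has $\deg(f) < d$ (necessarily equal to $5$, the gonality), so that $S \cap \aut(\P^1_\Q) H^0(D) \aut(X_1(25)) = \emptyset$ and Proposition~\ref{prop:empty} yields the emptiness of $S$. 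This is the only genuinely new computational input the proposition requires, and it pins down the $X_1(25)$ anomaly highlighted in the footnote after Theorem~\ref{rat_points}.
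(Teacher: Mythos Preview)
Your overall architecture matches the paper's: exhibit modular units for the listed $(N,d)$, and for unlisted $N$ rule out degree-$d$ functions via gonality bounds (Table~1 for $N\leqslant 40$, Abramovich/Kim--Sarnak for $N>40$ except $42,44,46,48$, and a direct $\F_p$-gonality computation for those four). The one substantive divergence, and the one place where your argument breaks, is the case $N=25$, $d\in\{6,7\}$.

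Here you propose to use Proposition~\ref{prop:optimisation1} and dominating sets to show that no function of degree exactly $d$ exists. But Proposition~\ref{prop:optimisation1} requires a finite ground field, so you are implicitly working over some $\F_p$. Showing that $\F_p(X_1(25))$ has no function of degree $6$ (which is true for $p=2$) does \emph{not} imply the same over $\Q$: a degree-$6$ function on $X_1(25)_\Q$ could perfectly well reduce to a function of degree $5$ modulo $p$, since reduction can lower degree. The inequality~(\ref{CompareGon}) only gives $\gon_{\F_p}\leqslant\gon_\Q$; it says nothing about the existence of functions of a fixed degree strictly above the gonality. If instead you intend to run the dominating-set argument directly over $\Q$ (your $\aut(\P^1_\Q)$ suggests this), then Proposition~\ref{prop:optimisation1} is unavailable and you have no finite enumeration to begin with.

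The paper closes this gap with a separate tool, Proposition~\ref{technical_proposition}: one checks that $J_1(25)(\Q)\hookrightarrow J_1(25)(\F_2)$ (using that $\#J_1(25)(\F_3)$ is odd), that $\F_2(X_1(25))$ has no degree-$d$ function, that $W_d^2(\F_2)=\emptyset$, and that the relevant $W_{d-i}^1$ and symmetric powers surject from $\Q$ to $\F_2$. These conditions together force every $2$-dimensional subspace of sections of a degree-$d$ line bundle over $\Q$ to have a base point, so no basepoint-free $g^1_d$ exists. This Picard-level comparison is the missing ingredient in your treatment of $N=25$.
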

\begin{proof} Let $5 \leqslant d \leqslant 8$.
For each $N$ listed in Theorem~\ref{rat_points}, our divisor data \cite{URLgonality} makes it easy to find an
explicit 
modular unit
(Section~\ref{CuspFunctions}) of degree $d$.
So it suffices to show that there are no functions of degree $d$ for the other $N$'s.
 \begin{itemize}
  \item $N > 40$ and $N \neq 42, 44, 46, 48$: Theorem~\ref{BoundAbr} implies $\gon_{\Q}(X_1(N)) > 8$.
  \item $N \leqslant 40$ or $N = 42, 44, 46, 48$ and $(N,d) \neq (25,6),(25,7)$: For $N \leqslant 40$ see Table~1. Similar computations
   (based on Proposition~\ref{prop:optimisation1}, available at \cite{URLgonality}) show that $\gon_\Q(X_1(N)) > 8$ for $N = 42, 44, 46, 48$.
 \item $N=25$ and $d=6,7$: We prove this by verifying conditions 1--5 of Proposition~\ref{technical_proposition} below
 with $C = X_1(25)$, $d=6,7$ and $p=2$.
 \begin{itemize}
 \item[1. ] The rank of $J_1(25)(\Q)$ is 0 and $\# J_1(25)(\mathbb F_3)=2503105$ is odd. So $\# J_1(25)(\Q)$ is finite and odd and hence $J_1(25)(\Q)
 \hookrightarrow J_1(\F_2)$.
 \item[2,3] We verified this using a Magma computation (files at \cite{URLgonality}).
 \item[4. ] 
We need to show that $W_5^1(\Q) \to W_5^1(\F_2)$ and $W_6^1(\Q) \to W_6^1(\F_2)$ are surjective. $W_5^1(\Q) \neq \emptyset$ by Table~1.
A Magma computation showed that $\#W_5^1(\F_2)  = 1$ and that
 every element in $W_6^1(\F_2)$ is of the form $D+P$ where $P \in X_1(25)(\F_2)$ and
  $D$ is the unique element of $W_5^1(\F_2)$. Such $D+P$ lift to $W_6^1(\Q)$ because $X_1(25)(\Q) \to X_1(25)(\F_2)$ is surjective.
 \item[5. ] This is true because $X_1(25)$ has no places of degree 2 over $\F_2$, and the rational places over $\F_2$ are exactly the 10 cusps
  that come from the rational cusps in $X_1(25)(\Q)$.
  \end{itemize}
  \end{itemize}
 \end{proof}


For $N \leqslant 40$, applying a {\tt ShortVectors}-search to our divisor data \cite{URLgonality}
shows that $\Q(X_1(N))$ has a function of degree $d \geqslant \gon_{\Q}(X_1(N))$
if $(N,d) \not\in S = \{(25,6)$, $(25,7)$, $(32,9)$, $(33,11)$, $(35,13)$, $(39,15)$, $(40,13)\}$.
The search also showed that there are no modular units with $(N,d) \in S$.
Ruling out degree-$d$ functions other than modular units is more work:
 
\begin{prop}\label{technical_proposition}
Let $C/\Q$ with $C(\Q) \neq \emptyset$ be a smooth projective curve with good reduction at a prime $p$. Let $W_d^r(K)$
denote the closed subscheme of $\Pic^{d} C(K)$
corresponding to the line bundles $\mathcal L$ of degree $d$ whose global sections form a $K$-vectorspace
of dimension $\geqslant r+1$. Suppose that:
\begin{enumerate}
\item $J(C)(\Q) \to J(C)(\F_p)$ is injective.
\item $\F_p(C)$ contains no functions of degree $d$.
\item $W_d^2(\F_p) = \emptyset$.
\item $W^1_{d-i}(\Q)  \to W^1_{d-i}(\F_p)$ is surjective for all $1 \leqslant i \leqslant d - \gon_{\F_p}(C)$.
\item $C^{(i)}(\Q)  \to C^{(i)}(\F_p)$ is surjective for all $1 \leqslant i \leqslant  d-\gon_{\F_p}(C)$.
\end{enumerate}
Then $\Q(C)$ contains no functions of degree $d$.
\end{prop}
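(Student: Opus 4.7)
The plan is to argue by contradiction: suppose $f \in \Q(C) - \Q$ has $\deg(f) = d$, and attach to it the line bundle $\mathcal{L} := \mathcal{O}_{C}((f)_\infty)$, which lies in $W^1_d(\Q)$ since $1$ and $f$ are two linearly independent global sections. The target contradiction is a nonzero effective $\Q$-divisor $\tilde B$ forced into the base locus of $H^0(\mathcal{L})$, which is impossible because the zero and pole divisors of a non-constant function have disjoint support.

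First I would control $h^0(\mathcal{L})$ on both sides. Using good reduction, extend $\mathcal{L}$ to the smooth $\Z_{(p)}$-model of $C$ (e.g.\ by taking the Zariski closure of $(f)_\infty$) and apply upper semicontinuity of $h^0$ in the resulting flat family to obtain $h^0(\mathcal{L}_{\F_p}) \geqslant h^0(\mathcal{L}_\Q) \geqslant 2$. Hypothesis~3 ($W^2_d(\F_p)=\emptyset$) then forces $h^0(\mathcal{L}_{\F_p}) = 2$, whence $h^0(\mathcal{L}_\Q) = 2$ as well. The $g^1_d$ on $C_{\F_p}$ defined by $\mathcal{L}_{\F_p}$ cannot be base-point-free — that would give a morphism $C_{\F_p} \to \P^1_{\F_p}$ of degree $d$, violating hypothesis~2 — so it carries a base divisor $B \in C^{(b)}(\F_p)$ of degree $b \geqslant 1$. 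The residual system $\mathcal{L}_{\F_p}(-B) \in W^1_{d-b}(\F_p)$ is base-point-free, giving a non-constant function of degree $d-b \geqslant \gon_{\F_p}(C)$, which places $b$ in the range $1 \leqslant b \leqslant d - \gon_{\F_p}(C)$ where hypotheses~4 and~5 apply.

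Next I would lift the data back to characteristic $0$: hypothesis~4 supplies $\mathcal{M} \in W^1_{d-b}(\Q)$ whose reduction is $\mathcal{L}_{\F_p}(-B)$, and hypothesis~5 supplies $\tilde B \in C^{(b)}(\Q)$ reducing to $B$, so $\mathcal{L}$ and $\mathcal{M}(\tilde B)$ are two elements of $\Pic^d C(\Q)$ with the same reduction in $\Pic^d C(\F_p)$. Fixing a basepoint from $C(\Q)$ (nonempty by assumption) realizes $\Pic^d C$ as a $J(C)$-torsor, so hypothesis~1 upgrades to injectivity of $\Pic^d C(\Q) \to \Pic^d C(\F_p)$, forcing $\mathcal{L} \cong \mathcal{M}(\tilde B)$ over $\Q$. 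Multiplication by the canonical section of $\mathcal{O}(\tilde B)$ then embeds $H^0(\mathcal{M}) \hookrightarrow H^0(\mathcal{L})$ as the subspace of sections vanishing along $\tilde B$; since both sides have dimension $\geqslant 2 = h^0(\mathcal{L}_\Q)$, this inclusion is an equality, so every global section of $\mathcal{L}$ over $\Q$ vanishes on $\tilde B$. But $H^0(\mathcal{L})$ is spanned by two sections whose zero divisors are $(f)_\infty$ and $(f)_0$, and these are disjoint, so the base locus of the complete linear system is empty — contradicting $\deg \tilde B \geqslant 1$.

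The main subtlety is pinning down $h^0 = 2$ simultaneously over both $\Q$ and $\F_p$ (which is exactly what hypothesis~3 is designed for); without that, the transfer of the base divisor $B$ across characteristic would carry no information over $\Q$. Once the dimension counts are in place, the rest is routine manipulation of linear systems, reduction mod $p$, and the $J(C)$-torsor structure of $\Pic^d C$.
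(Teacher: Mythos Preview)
Your proposal is correct and follows essentially the same approach as the paper: reduce to a line bundle $\mathcal{L} \in W^1_d(\Q)$, use hypothesis~3 to pin $h^0=2$ on both sides, use hypothesis~2 to force a nontrivial base divisor $B$ on the reduction, lift $B$ and the residual pencil via hypotheses~5 and~4, and then use the injectivity from hypothesis~1 to conclude that $\tilde B$ is a base divisor of $|\mathcal{L}|$ over $\Q$, contradicting base-point-freeness of the pencil generated by $1$ and $f$. The only cosmetic difference is that the paper phrases the conclusion as ``every $2$-dimensional subspace of $H^0(\mathcal{L})$ has a base point'' for arbitrary $\mathcal{L}\in W^1_d(\Q)$, rather than starting from a specific $f$, and it applies hypothesis~4 directly to $\mathcal{L}(-D)$ rather than naming a separate lift $\mathcal{M}$; these are equivalent.
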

\begin{proof}
Item~1 and $C(\Q) \neq \emptyset$ imply that $\Pic^k C(\Q)$ to $\Pic^k C(\F_p)$ is injective for all $k$.
To show that $\Q(C)$ has no function of degree $d$ it suffices to show for all $\mathcal L$ in $W_d^1(\Q)$
that every $2$-dimensional subspace $V \subset \mathcal L(C)$ has a base point. 

Let $\mathcal L \in W_d^1(\Q)$. Item~3 implies $\dim_{\F_p} \mathcal L_{\F_p}(C_{\F_p})  = 2$ and so
$\dim_\Q \mathcal L(C)=2$. Let $D_{\F_p}$ be the divisor of basepoints of $\mathcal L_{F_p}$ and
let $i$ be its degree. Item~2 implies $i  \geqslant 1$ and because $\mathcal L_{\F_p}(-D_{\F_p}) \in W_{d-i}^1(\F_p)$ we have 
$i \leqslant d - \gon_{\F_p}(C)$. By item~5 there is a $D \in C^{(i)}(Q)$ that reduces to $D_{\F_p}$.
By of the injectivity of $\Pic^{d-i} C(\Q) \to \Pic^{d-i} C(\F_p)$, we know that $\mathcal L(-D)$
is the unique point lying above $\mathcal L_{F_p}(-D_{\F_p})$. Then item~4 gives
the following inequalities $$2 \leqslant \dim_\Q \mathcal L(-D)(C)  \leqslant \dim_\Q \mathcal L(C) = 2.$$
In particular, the unique $2$-dimensional $V \subset \mathcal L(C)$ has the points in $D$ as base points.
\end{proof}

\begin{remark} {\em \bf Extending Theorem~\ref{rat_points}}. 
With our divisor data \cite{URLgonality} we can quickly
find\footnote{This can be done for substantially larger $(N,d)$ as well.}
all $N$ for which $X_1(N)$ has a modular unit (and hence, infinitely many places) of degree $d=9$ over $\Q$.
The difficult part is to rule out infinitely many places of degree $9$ for the remaining $N$'s.
Take for example $N=57$.
Our proof that $\gon_\Q(X_1(57))) \geqslant 17$ was
already a lot of work (see~\cite{URLgonality}) but pushing this bound to 19 will be much more work still.

Another issue arises for $N=37$. The
Jacobian has positive rank, and the $\Q$-gonality is $18$ so we can not use Frey's theorem
to rule out infinitely many places of degree $9$. 
$J_1(37)$ has only one simple abelian sub-variety of positive rank, namely an elliptic curve $E$ isogenous to $X_0^+(37)$.
The question whether $X_1(37)$ has infinitely many places of degree $9$ is equivalent to the question whether $W_9^0(X_1(37))$ contains a translate of $E$.

Higher values of $d$ lead to additional problems.
If $d \leqslant 8$ then $X_1(N)_{\Q}$ has infinitely many places of degree $d$ if and only if it has a modular unit of degree~$d$,
but we do not expect that to continue for very large $d$'s.

\end{remark}

\appendix
\section[Appendix A]{Magma Calculations}
We use one custom function. It takes as input a divisor and gives as output the degrees of all non-constant functions in the 
associated Riemann-Roch space. 
{\small
\begin{verbatim}
function FunctionDegrees(divisor)
    constantField := ConstantField(FunctionField(divisor));
    space,map := RiemannRochSpace(divisor);
    return [Degree(map(i)) : i in space | map(i) notin constantField];
end function;
\end{verbatim}
}
We divide the computation according to {\em type}:
\begin{definition}
Write $D$ as $$\sum_{i=1}^kn_ip_i$$ with $p_i$ distinct places and $n_i \in \Z - \set{0}$ such that $(\deg(p_1),n_1)\geqslant  (\deg(p_2),n_2) \geqslant \cdots \geqslant (\deg(p_k),n_k)$ where
$\geqslant$ is the lexicographic ordering on tuples. Then
$type(D)$ is defined to be the ordered sequence of tuples
$$((\deg(p_1),n_1), (\deg(p_2),n_2),\ldots ,(\deg(p_k),n_k)).$$
If $\deg(p_i) =1$ for all $i$ then $(n_1,\ldots,n_k)$ is a shorter notation for ${\rm type}(D)$. 
\end{definition}
For example if $D=P_1+3P_2$ where $P_1$ is a place of degree $5$ and $P_2$ a place of degree 1 then $$type(D)=((5,1),(1,3)).$$
The type of a divisor is stable under the action of $\aut(C)$.
\subsection{The case $N = 37$ and mdeg $>$ 1} \label{sec:calculation37}
\subsubsection{Dominating the set $S_2$}
\label{SubS2}
Let \[
{\tt cuspsum} : = \sum_{p \in X_1(37)(\F_2)} p \]
(short for rational-cusp-sum) be the sum of all $\F_2$ rational places.
Then the set $$A_2' := \set{ {\tt cuspsum} + D \mid D=p_1+\cdots+p_7 \text{ with }  p_1,\ldots,p_7 \in X_1(37)(\F_2)}$$ dominates $S_2$.
However, $A_2'$ contains many divisors.
Using divisors of higher degree, of the form $k \cdot {\tt cuspsum} + \cdots$ for $k=1,2,3$ depending on type$(D)$,
we can dominate $S_2$ with much fewer divisors. To prove:
\begin{equation}
\min\{ \deg(f) \, | \, {f\in H^0(X_1(37)_{\F_2},{\tt cuspsum} + D) - \F_2} \} \geqslant 18 \label{eq:geq18}
\end{equation} for all ${\tt cuspsum}+D$ in $A'_2$ 
we divide the computation:
The table below list for each type$(D)$ (a partition of 7) from which
Magma calculation we can conclude inequality~($\ref{eq:geq18}$) for that type.

\begin{center}
\begin{tabular}{l|r}
$type(D)$ & calculation\\
\hline
(7), (6, 1) and (5, 2) & 1\\
(5, 1, 1), (4, 3), (4, 2, 1), (4, 1, 1, 1) and (3, 3, 1) & 2\\
(3, 2, 2) & 3 \\
(3, 2, 1, 1) and (3, 1, 1, 1, 1) & 2 \\
(2, 2, 2, 1), (2, 2, 1, 1, 1), (2, 1, 1, 1, 1, 1), (1, 1, 1, 1, 1, 1, 1) & 4
\end{tabular}
\end{center}

As in Section~\ref{md1}, start the computation by loading the file \verb+X1_37_AFF.m+.
Next, load the program \verb+FunctionDegrees+ and then run the following:

\begin{verbatim}
> //calculation 1
> p := plc1[1]; //diamond operators act transitively on X1(37)(F2)
> [Dimension(cuspsum + 6*p + 2*P) : P in plc1];
[ 1, 1, 1, 1, 1, 1, 1, 1, 1, 1, 1, 1, 1, 1, 1, 1, 1, 1 ]
\end{verbatim}
\begin{verbatim}
> //calculation 2
> Min(&cat[FunctionDegrees(2*cuspsum + 4*p + 2*P) : P in plc1]);
18 105
\end{verbatim}
\begin{verbatim}
> //calculation 3
> s := Subsets(SequenceToSet(plc1[2..18]),2);
> &cat[FunctionDegrees(cuspsum + 3*p + 2*(&+PQ)) : PQ in s];
[]
\end{verbatim}
\begin{verbatim}
> //calculation 4
> Min(FunctionDegrees(3*cuspsum - 4*p);
18 48
\end{verbatim}
The set $A_2$ in the proof of Proposition~\ref{prop:emptySi} is the set of divisors occurring in the four calculations above.
Calculation 4 used that if $f \in \F_2(X_1(37))$ has
$\deg(f) \leqslant 17$ then at least one of $f, f+1$ has an $\F_2$-rational
root since $\#X_1(37)(\F_2)=18$.

\subsubsection{Dominating the set $S_3$}
The set 
$$A'_3 := \set {{\tt cuspsum} +D \mid  D \geqslant 0, \ {\rm deg}(D)=12  \ {\rm with} \ \geqslant 1 {\rm \ nonrational \ place} }$$
dominates all functions in $S_3$.
This time we break up the computation into the following types where we use the following shorthand notation $$a(c,d):=\underbrace{(c,d),\ldots, (c,d)}_a$$
\begin{center}
\begin{tabular}{l|r}
type$(D)$ covered by calculation \#$c$ &$c$ \\
\hline
((12,1)) and ((11, 1),(1,1))			 & 1\\
((10,1),(1,2)) and ((10,1),(1,1),(1,1)) & 2\\
((9,1)(1,3)) & 3 \\
((9,1),(1,2),(1,1)) and ((9,1),(1,1),(1,1),(1,1)) & 4\\
((7,1),(1,5)), ((7,1),(1,4),(1,1)) and ((7,1),(1,3),(1,2)) & 5 \\
((7,1),(1,3),(1,1),(1,1)) and ((7,1),(1,2),(1,2),(1,1)) & 6 \\
((7,1),(1,2),3(1,1)) and ((7,1),5(1,1)) & 7\\
((6,2)) and ((6,1),(6,1)) & 8\\
((6,1),(1,6)), ((6,1),(1,5),(1,1)), ((6,1),(1,4),(1,2)), ((6,1),(1,3),(1,3)) & 9\\
((6,1),(1,4),2(1,1)), ((6,1),(1,3),(1,2),(1,1)), ((6,1),3(1,2)) &10\\
((6,1),2(1,2),2(1,1)),((6,1),(1,3),3(1,1)),((6,1),(1,2),4(1,1)),((6,1),6(1,1))&11
\end{tabular}
\end{center}
$X_1(37)_{\F_2}$ has no places of degrees 2--5 and 8.  So any non-rational place
contributes at least 6 to ${\rm deg}(D)$, a fortunate fact that reduces the number of divisors to a manageable level.
The Magma commands to cover these 11 cases are similar to those in Section~\ref{SubS2} and can be copied from~\cite{URLgonality}.
}

\begin{theorem}
The values in Table~1 are upper bounds for the gonality of $X_1(N)$ over $\Q$.  For $N \leqslant 40$ they are exact values.
\end{theorem}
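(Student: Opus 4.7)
The plan is to separate the two claims. For the upper bounds, I would exhibit, for each $N\leqslant 250$, an explicit modular unit $g_N\in \Q(X_1(N))$ whose degree matches the entry in Table~1. The construction is Tasks~1--2 from the introduction: using the generators $f_2,\ldots,f_{\lfloor N/2\rfloor+1}$ of $\OurCuspGroup(N)\subseteq \CuspGroup(N)$ and their explicit divisors (Section~\ref{CompRemarks}), any element of $\OurCuspGroup(N)$ corresponds to an integer vector whose degree is half its $1$-norm with respect to the basis of $\CuspDiv(N)$. An LLL-based short-vector search (probabilistic for large $N$) then produces the $g_N$ listed at~\cite{URLgonality} and in Section~\ref{Task1}, and $\deg(g_N)$ is by definition an upper bound for $\gon_\Q(X_1(N))$.

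For the exact-value claim at $N\leqslant 40$, the strategy is to use inequality~(\ref{CompareGon}) to reduce to a lower bound over the smallest prime $p_N\nmid N$: it suffices to check $\gon_{\mathbb{F}_{p_N}}(X_1(N))\geqslant \deg(g_N)$. For every $N\leqslant 40$ with $N\neq 37$ this can be done by applying Proposition~\ref{prop:empty} to the set $S$ of functions in $\mathbb{F}_{p_N}(X_1(N))-\mathbb{F}_{p_N}$ of degree $<\deg(g_N)$, with $A$ chosen via Proposition~\ref{prop:optimisation1}, shrunk further using orbits under the diamond operators. For each $D\in A$ one enumerates $H^0(X_1(N)_{\mathbb{F}_{p_N}},D)$ with Magma's \texttt{RiemannRochSpace} and checks that no element has degree $<\deg(g_N)$; this is feasible because, even though $\#A$ would be huge in a naive enumeration, the Riemann--Roch spaces themselves remain tiny when the gonality is far below the genus.

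The main obstacle is $N=37$, whose gonality $18$ puts it out of reach of a direct Riemann--Roch enumeration over $\mathbb{F}_2$. Here the plan is the subdivision of Section~\ref{md2}: Proposition~\ref{prop:subdivision} uses the $18$ rational places of $X_1(37)_{\F_2}$ and the transitive action of $\aut(\P^1_{\F_2})$ on $\P^1(\F_2)$ to reduce a hypothetical $g$ of degree $\leqslant 17$ to one of three shapes. The case $\mdeg(f)=1$ is handled by Proposition~\ref{prop2}: compute the kernel of $\Z^{X_1(37)(\F_2)}\to \Pic X_1(37)_{\F_2}$ via Magma's \texttt{ClassGroup}, do a short-vector search in this lattice, and settle the remaining high-valuation configurations by a Riemann--Roch calculation on divisors $15p+q+r$. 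The cases $\mdeg(f)>1$ are Proposition~\ref{prop:emptySi}, where the dominating sets $A_2,A_3$ of divisors of the form $k\cdot\mathtt{cuspsum}+\text{(small)}$ are split by the \emph{type} of the remaining part so that each resulting Riemann--Roch computation is tractable, exploiting the fortunate fact that $X_1(37)_{\F_2}$ has no places of degrees $2,3,4,5,8$.

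Putting these together yields $\gon_\Q(X_1(N))=\deg(g_N)$ for all $N\leqslant 40$, and the upper-bound part of Table~1 for the remaining $N\leqslant 250$. As a sanity check, I would cross-verify the $\Div(f_k)$ data in Maple by two independent methods (floating-point evaluation near cusps and Puiseux expansions), compare the short-vector output with Magma's, and confirm that Magma's \texttt{ClassGroup} and \texttt{RiemannRochSpace} outputs are consistent with Sage computations, as described in Remark~\ref{remarkcheck}.
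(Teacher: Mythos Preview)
Your proposal is correct and follows essentially the same approach as the paper's own proof: explicit modular units $g_N$ (Tasks~1--2, Section~\ref{Task1}, \cite{URLgonality}) certify the upper bounds, and the lower bounds for $N\leqslant 40$ come from the $\mathbb{F}_{p_N}$-gonality via Propositions~\ref{prop:empty} and~\ref{prop:optimisation1} together with the special treatment of $N=37$ in Sections~\ref{md1}--\ref{md2} and Appendix~\ref{sec:calculation37}. The paper's proof is just a terse pointer to these same ingredients, so your write-up is a faithful (and more explicit) expansion of it.
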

\begin{proof}
The functions
listed at \cite{URLgonality} are explicit proofs for the upper bounds in Table~1. Section~\ref{lower}, Appendix A, and the
accompanying Magma files on \cite{URLgonality} prove that the bounds are sharp for $N \leqslant 40$.
\end{proof}


\bibliographystyle{alpha}

\end{document}